\newtheorem{theoremalph}{Theorem}
\newtheorem{Theorem}{Theorem}[section]
\newtheorem*{Theorem A}{Theorem A}
\newtheorem{Proposition}[Theorem]{Proposition}
\newtheorem{Lemma}[Theorem]{Lemma}
\newtheorem{Remark}[Theorem]{Remark}
\newtheorem{Corollary}[Theorem]{Corollary}
\newtheorem*{Claim}{Claim}
\newtheorem*{Acknowledgements}{Acknowledgements}
\newcommand{\enabstractname}{Abstract}
\newcommand{\frabstractname}{R\'{e}sum\'{e}}
\newenvironment{enabstract}{%
	\par\small
	\noindent\mbox{}\hfill{\bfseries \enabstractname}\hfill\mbox{}\par
	\vskip 2.5ex}{\par\vskip 2.5ex}
\renewcommand\top{{\rm top}}
\begin{document}

\newpage
\title{Volume growth and  topological entropy of certain partially hyperbolic systems}
\author{Dawei Yang \and  Yuntao Zang\footnote{ Y. Zang is the corresponding author. Y. Zang would like to thank the support of Shanghai Key Laboratory of Pure Mathematics and Mathematical Practice and the project funded by China Postdoctoral Science Foundation (2020TQ0098). D. Yang was partially supported by NSFC 11671288, 11822109, 11790274.}}
\date{}
\maketitle
\begin{enabstract}
	Let $f$ be a $C^{1}$ diffeomorphism on a compact manifold $M$ admitting a partially hyperbolic splitting $TM=E^{s}\oplus_{\prec} E^{1}\oplus_{\prec} E^{2}\cdots \oplus_{\prec}E^{l}\oplus_{\prec} E^{u}$ where $E^{s}$ is uniformly contracting, $E^{u}$ is uniformly expanding and  $\dim E^{i}=1,\,1\leq i\leq l.$ We prove an entropy formula w.r.t. the volume growth rate of subspaces in the tangent bundle:
	
$$h_{\top}(f)=\lim_{n\to+\infty}\frac{1}{n}\log\int\max_{V\subset T_{x}M}|\det Df_{x}^{n}|_{V}|\,d x.$$ 

\end{enabstract}

\section{Introduction}There are many invariants (e.g., various entropies, volume growth) that can measure the complexity of a dynamical system. The relationships between them were studied. One interesting inequality is an upper bound of the topological entropy:
\begin{equation}\label{Przytychi upper bound of entropy}
h_{\top}(f)\leq\limsup_{n\to+\infty}\frac{1}{n}\log\int||(Df^{n}_{x})^{\wedge}||\,d x
\end{equation}
where $f$ is a smooth diffeomorphism on a compact Riemannian manifold $M$ and $(Df^{n}_{x})^{\wedge}$ is the induced (by $Df^{n}_{x}$) map between exterior algebras of the tangent spaces $T_{x}M$ and $T_{f^{n}x}M$. Here $||\cdot||$ is the norm on operators, induced from the Riemannian metric.

\smallskip

This was obtained by Przytychi \cite{Prz80}  for $C^{1+\alpha}$ diffeomorphisms. Later on, it has been proved by Kozlovski \cite{Koz98} that for $C^\infty$ diffeomorphisms, it is in fact an equality. In this work, we would like to extend Przytychi-Kazlovski's results in a different setting: some $C^1$ partially hyperbolic diffeomorphisms.

\medskip

Hereafter, we always assume that $V$ denotes a linear space (or subspace). Our main result is the following: 

\begin{theoremalph}\label{entropy of partially hyperbolic systems with multi 1D centers}
	Let $f$ be a $C^{1}$ diffeomorphism on a compact manifold $M$. Assume there is a partially hyperbolic splitting $TM=E^{s}\oplus_{\prec} E^{1}\oplus_{\prec} E^{2}\cdots \oplus_{\prec}E^{l}\oplus_{\prec} E^{u}$ where $E^{s}$ is uniformly contracting, $E^{u}$ is uniformly expanding and $\dim E^{i}=1,\,1\leq i\leq l.$ Then $$h_{\top}(f)=\liminf_{n\to+\infty}\frac{1}{n}\log\int\max_{V\subset T_{x}M}|\det Df_{x}^{n}|_{V}|\,d x=\limsup_{n\to+\infty}\frac{1}{n}\log\int\max_{V\subset T_{x}M}|\det Df_{x}^{n}|_{V}|\,d x.$$ 
\end{theoremalph}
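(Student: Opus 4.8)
Write $\Phi_n(x)=\max_{V\subseteq T_xM}|\det Df^n_x|_V|$, so that $\Phi_n(x)=\|(Df^n_x)^{\wedge}\|$ and Przytycki's inequality~\eqref{Przytychi upper bound of entropy} gives the easy bound $h_{\top}(f)\le\limsup_n\frac1n\log\int\Phi_n\,dx$. The substance of the theorem is the reverse inequality $\limsup_n\frac1n\log\int\Phi_n\,dx\le h_{\top}(f)$ --- which fails for general $C^1$ maps and, in the $C^\infty$ setting, is Kozlovski's theorem --- together with the upgrade of the outer $\limsup$ to a limit; partial hyperbolicity will play the role usually played by higher smoothness. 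I would begin with a reduction. Since the splitting is dominated, the angles between sub-bundles are uniformly bounded below, each $E^i$ is $Df$-invariant, and $E^s,E^u$ are uniformly contracted/expanded; consequently the singular value function of $Df^n$ is comparable, up to a uniform constant, to that of its block-diagonal part, and one obtains $\Phi_n(x)\asymp\max_{I\subseteq\{1,\dots,l\}}e^{S_ng_I(x)}$, where $E^I=\bigoplus_{i\in I}E^i$, $g_I=\log|\det Df|_{E^u}|+\sum_{i\in I}\log|Df|_{E^i}|$ is continuous, and $S_n$ is the Birkhoff sum. There being only $2^l$ index sets, $\tfrac1n\log\int\Phi_n\,dx=\max_I\tfrac1n\log\int e^{S_ng_I}\,dx+O(1/n)$, and it suffices to treat each $\int e^{S_ng_I}\,dx$.

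The heart of the matter is a volume estimate for Bowen balls $B_n(x,\varepsilon)=\bigcap_{k=0}^n f^{-k}B(f^kx,\varepsilon)$. Setting $I(x,n)=\{i:|Df^n_x|_{E^i}|>1\}$, I want to show that, for all small $\varepsilon$,
\[
\operatorname{Leb}\big(B_n(x,\varepsilon)\big)\ \le\ C\,e^{n\varpi(\varepsilon)}\,\varepsilon^{\dim M}\,e^{-S_ng_{I(x,n)}(x)}\ \le\ C\,e^{n\varpi(\varepsilon)}\,\varepsilon^{\dim M}\,e^{-S_ng_I(x)}\qquad\text{for every }I,
\]
with $\varpi(\varepsilon)\to0$ as $\varepsilon\to0$; the last inequality just records that $S_ng_{I(x,n)}(x)=\max_IS_ng_I(x)$. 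For the first inequality I would work in a local chart around $x$ adapted to the splitting, built from the strong stable and strong unstable manifolds of $f$ (which exist in $C^1$ because $E^s$ is uniformly contracting and $E^u$ uniformly expanding) and the plaque families tangent to the center directions; since $Df^k_x$ preserves each sub-bundle, $B_n(x,\varepsilon)$ is comparable to a box whose extent in $E^u$ and in each center line expanded over $[0,n]$ is forced, by the constraint $d(f^ny,f^nx)\le\varepsilon$ at time $n$, into an ellipsoid of the stated volume, while its extent along $E^s$ and along the non-expanded center lines stays of order $\varepsilon$. The factor $e^{n\varpi(\varepsilon)}$ absorbs the distortion of $f^n$ along plaques that remain $\varepsilon$-small throughout the orbit, which is controlled using only uniform continuity of $Df$. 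This is precisely where partial hyperbolicity substitutes for $C^\infty$ smoothness: the cocycle being block-diagonalized by the invariant splitting, no Yomdin-type control of higher derivatives is required.

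Granting this, the upper bound is a covering argument: fixing $\delta>0$ and $\varepsilon$ small, cover $M$ for large $n$ by $N_n\le e^{n(h_{\top}(f)+\delta)}$ Bowen balls $B_n(x_j,\varepsilon)$; using $e^{S_ng_I}\le e^{n\,\mathrm{var}(g_I,\varepsilon)}e^{S_ng_I(x_j)}$ on $B_n(x_j,\varepsilon)$ together with the volume estimate, $\int_M e^{S_ng_I}\,d\operatorname{Leb}\le C\varepsilon^{\dim M}N_ne^{n(\mathrm{var}(g_I,\varepsilon)+\varpi(\varepsilon))}$, whence $\limsup_n\tfrac1n\log\int e^{S_ng_I}\,dx\le h_{\top}(f)$ after letting $\varepsilon,\delta\to0$, and therefore $\limsup_n\tfrac1n\log\int\Phi_n\,dx\le h_{\top}(f)$. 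With Przytycki's inequality this already gives $h_{\top}(f)=\limsup_n\tfrac1n\log\int\Phi_n\,dx$. For the $\liminf$ I would prove $h_{\top}(f)\le\liminf_n\tfrac1n\log\int\Phi_n\,dx$. Given an ergodic $\mu$, the Margulis--Ruelle inequality (valid in $C^1$) yields $h_\mu(f)\le\sum_{\lambda_i(\mu)>0}\lambda_i(\mu)$; domination forces the exponents to be strictly ordered, so the positive ones are those of $E^u$ together with a suffix $E^{i_0},\dots,E^l$, and $\sum_{\lambda_i(\mu)>0}\lambda_i(\mu)=\int g_{I_\mu}\,d\mu$ with $I_\mu=\{i_0,\dots,l\}$. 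For a suffix $I$ one has the dominated splitting $(E^s\oplus_{\prec} E^{\{1,\dots,i_0-1\}})\oplus_{\prec}(E^u\oplus E^I)$, and a volume-growth/recurrence/bounded-distortion argument for the plaque family tangent to $E^u\oplus E^I$ should give $\liminf_n\tfrac1n\log\int e^{S_ng_I}\,dx\ge\sup_\nu\int g_I\,d\nu$. Since $\int\Phi_n\,dx\gtrsim\int e^{S_ng_{I_\mu}}\,dx$, this yields $\liminf_n\tfrac1n\log\int\Phi_n\,dx\ge\int g_{I_\mu}\,d\mu\ge h_\mu(f)$; taking the supremum over ergodic $\mu$ and applying the variational principle closes the chain, so all three quantities equal $h_{\top}(f)$.

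I expect the Bowen-ball volume estimate to be the main obstacle: making ``$B_n(x,\varepsilon)\approx$ linearized box'' precise, with a genuinely sub-exponential error, in the $C^1$ category is delicate, because the center plaque families are merely continuous and the distortion of $f^n$ along long orbit segments must be handled with $C^1$ information alone. The secondary delicate point is the uniformity statement in the last step, needed to pass from a possibly singular $\mu$ to the Lebesgue integral.
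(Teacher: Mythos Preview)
Your route to $\limsup_n\tfrac1n\log\int\Phi_n\,dx\le h_{\top}(f)$ is essentially the paper's: reduce $\Phi_n$ to $\max_I e^{S_ng_I}$ (the paper does this only sub-exponentially, via a Grassmannian cocycle and Cao's lemma in Proposition~\ref{relations between lambda and kappa}; your bounded-angle argument in fact gives the sharper uniform comparison), bound $\int_{B_n(x,\varepsilon)}e^{S_ng_I}\le Ce^{n\varpi(\varepsilon)}$ on each Bowen ball (your pointwise volume bound plus bounded distortion is equivalent to the paper's Lemma~\ref{volume growth of dynamical ball}), and cover. One minor slip: Przytycki's inequality~\eqref{Przytychi upper bound of entropy} is a $C^{1+\alpha}$ result and cannot be invoked here; but this is harmless once the $\liminf$ bound is secured.

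The genuine gap is in the $\liminf$ step. The claim $\liminf_n\tfrac1n\log\int_M e^{S_ng_I}\,dx\ge\int g_I\,d\mu$ (a fortiori $\ge\sup_\nu\int g_I\,d\nu$) is \emph{false} under the hypotheses of the theorem. On $\mathbb{T}^3$ take $f(x,y,z)=(2x,\,y/2,\,g(z))$ with $g$ a circle diffeomorphism satisfying $\tfrac12<g'<2$ and having a fixed point $p$ with $g'(p)>1$; this is partially hyperbolic with one-dimensional center $E^1=\langle\partial_z\rangle$. For $I=\{1\}$ one computes $\int_M e^{S_ng_I}=2^n\int_{\mathbb T}(g^n)'\,dz=2^n$, so the $\liminf$ equals $\log 2$; yet any invariant $\mu$ supported in $\{z=p\}$ has $I_\mu=\{1\}$ and $\int g_{I_\mu}\,d\mu=\log 2+\log g'(p)>\log 2$. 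Thus your chain $h_\mu\le\int g_{I_\mu}\,d\mu\le\liminf$ breaks at the second inequality. The paper circumvents this by dropping the Ruelle step and comparing $\int_M|\det Df^n|_{F^k}|$ to $h_\mu$ \emph{directly}: the recurrence/plaque argument you allude to (Lemma~\ref{volume growth of dynamical ball lower bound general version}) produces only the sub-exponential lower bound $\int_{B(x,n,\delta)}|\det Df^n|_{F^k}|\ge e^{-n\varepsilon}$ on a set of $\mu$-measure $>\tfrac12$, and Katok's entropy formula then supplies $\gtrsim e^{n(h_\mu-\varepsilon)}$ disjoint such Bowen balls; summing gives $\int_M|\det Df^n|_{F^k}|\ge e^{n(h_\mu-2\varepsilon)}$. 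The exponential growth comes from the \emph{number} of Bowen balls, i.e.\ from $h_\mu$ itself, not from the Lyapunov exponents --- and this is precisely what the factorization through Ruelle cannot see.
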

Note that $\|(Df^{n}_{x})^{\wedge}\|$ has a geometrical explanation: $$\|(Df^{n}_{x})^{\wedge}\|=\max_{V\subset T_{x}M}|\det Df_{x}^{n}|_{V}|.$$ In this paper, we will use that latter notation as it provides more direct computation.

The dynamics of hyperbolic systems was understood very well. Beyond uniform hyperbolicity, partially hyperbolic diffeomorphisms as in Theorem~\ref{entropy of partially hyperbolic systems with multi 1D centers}  inherit some strong hyperbolicity. But the dynamics of these systems is not as clear as the hyperbolic case. The splitting type in Theorem~\ref{entropy of partially hyperbolic systems with multi 1D centers} has been shown to be abundant among diffeomorphisms away from homolinic tangencies by Crovisier-Sambarino-Yang \cite{CSY15}. For some other related work about the partial hyperbolicity with multi 1-D centers, one can see \cite{DFP12}, \cite{CYZ17}.

There are several other works which establish the relationship between entropy and the growth rates of volumes . Here we give a partial list.
\begin{itemize}
\item  In $C^\infty$ setting, Yomdin \cite{Yom87} showed a formula between the topological entropy and other form of volume growth on sub-manifolds (in contrast with our volume growth which is on the tangent space).  

\item Also in $C^\infty$ setting, Burguet \cite{Bur19} has shown that for Lebesgue almost every point $x\in M$, there is some invariant probability measure in the limit set of $\{\frac{1}{n}\sum_{j=0}^{n-1}\delta_{f^{j}(x)}\}_{n\in\mathbb N}$ with entropy larger than (or equal to) the volume growth rate on the direction with positive upper Lyapunov exponents. He also gave some counterexample to show that it is not true if the system only has finite regularity.

\item Cogswell \cite{Cog00} showed in $C^{2}$ setting that one can use the volume growth rate of one single local unstable manifold to bound above the metric entropy. Recently, a preprint \cite{Zan20} extends Cogswell's result which bounds the metric entropy also by a mixture between volume growth rate (or many other invariants) and positive Lyapunov exponents.

\item In the $C^{1+\alpha}$ setting, by using Pesin theory, Newhouse \cite{New88} showed that the metric entropy is bounded above by the volume growth rate of sub-manifolds which are transverse to the stable manifolds.

\item In some $C^1$ partially hyperbolic setting (or dominated splitting), Saghin \cite{Sag14}, Guo-Liao-Sun-Yang \cite{GLS18} proved that the metric entropy can be bounded above by a mixture between the positive Lyapunov exponents and the volume growth of some sub-manifold. In \cite{CCE15} and \cite{CYZ18}, for Lebesgue almost every point $x\in M$, a lower bound of the metric entropy of the measures in the limit set of $\{\frac{1}{n}\sum_{j=0}^{n-1}\delta_{f^{j}(x)}\}_{n\in\mathbb N}$ is established w.r.t. the sum of the Lyapunov exponents on the stronger sub-bundle.

\end{itemize}

Most of the known results (e.g. \cite{Sag14}, \cite{GLS18}, \cite{Prz80})  are concentrated on establishing inequalities for  entropies (upper bound) when the system are not smooth enough. Now we can get a more precise relationship between the topological entropy and the volume growth in the $C^1$ partially hyperbolic setting. So we have to do more work on the lower bound of the topological entropy. 

Our strategy is to estimate the volume growth on dynamical balls from above and below. Roughly speaking, we prove that for sufficiently many points $x\in M$ and sufficiently small number $\delta>0$, $$\int_{B(x,n,\delta)}\max_{V\subset T_{z}M}|\det Df_{z}^{n}|_{V}|\,d z\approx\text{sub-exponentially small}.$$ 

%
%
Usually, the above estimate of the volume growth on dynamical balls is established only for some Lyapunov regular points. But the corresponding estimation on Lyapunov regular points is not sufficient (the Lyapunov regular points might have zero Lebesgue measure) to get an entropy formula (equality). Therefore, we have to get a uniform bound on the volume growth on the dynamical balls at all points in $M$ (see Lemma \ref{volume growth of dynamical ball}). Unfortunately, Lemma \ref{volume growth of dynamical ball} only gives estimation of the volume growth on the stronger direction (w.r.t. to a dominated splitting). We come up with a general result in Section \ref{The volume growth rate along subspaces} to verify that the maximal volume growth can be achieved by the stronger direction not only for Lyapunov regular points but also for all points in $M$. This eventually leads us to the entropy formula.

\section{Volume growth of dynamical balls: estimation on topological entropy}
Let $f$ be a $C^1$ diffeomorphism on a compact manifold $M$. We first introduce the concept of partially hyperbolic splitting.


%
%

We say $f$ admits a \emph{dominated splitting} $TM=E\oplus_{\prec}F$ if $E,F$ are both $Df$-invariant and there are two constants $C>0$ and $\lambda\in (0,1)$ such that for any integer $k\in \mathbb{N}$, any $x\in M$ and any non-zero vectors $v_{E}\in E(x)$, $v_{F}\in F(x)$, we have $$\frac{||Df^{k}_{x}(v_{E})||}{||v_{E}||}\leq C\lambda^{k}\cdot\frac{||Df^{k}_{x}(v_{F})||}{||v_{F}||}.$$
\medskip

We say $f$ admits a \emph{partially hyperbolic splitting} $TM=E^{s}\oplus E^{1}\oplus E^{2}\cdots \oplus E^{l}\oplus E^{u}$, if 
\begin{itemize}
	\item For each $i=0,1,\cdots,l$, $TM=(E^{s}\oplus E^{1}\oplus\cdots \oplus E^{i})\oplus_\prec ( E^{i+1}\oplus\cdots \oplus E^{l}\oplus E^{u})$ is a dominated splitting.
	\item $E^{s}$ is uniformly contracting and $E^{u}$ is uniformly expanding: there are two constants $C>0$ and $\lambda\in (0,1)$ such that for any integer $k\in \mathbb{N}$, any $x\in M$ and any non-zero vectors $v^{s}\in E^{s}(x)$, $v^{u}\in E^{u}(x)$, we have $$\frac{||Df^{k}_{x}(v^{s})||}{||v^{s}||}\leq C\lambda^{k},\quad \frac{||Df^{-k}_{x}(v^{u})||}{||v^{u}||}\leq C\lambda^{k}.$$
\end{itemize}

\subsection{Upper bound of topological entropy}
In this subsection, our goal is to bound above the topological entropy by the volume growth rate.

\begin{Proposition}\label{topological entropy bounded by liminf}
	Let $f$ be a $C^{1}$ diffeomorphism on a compact manifold $M$. Assume there is a partially hyperbolic splitting $TM=E^{s}\oplus_{\prec} E^{1}\oplus_{\prec} E^{2}\cdots \oplus_{\prec}E^{l}\oplus_{\prec} E^{u}$ with $\dim E^{i}=1,\,1\leq i\leq l.$ Then $$h_{\top}(f)\leq\liminf_{n\to\infty}\frac{1}{n}\log\int\max_{V\subset T_{x}M}|\det Df_{x}^{n}|_{V}|\,d x.$$
\end{Proposition}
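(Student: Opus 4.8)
The plan is to derive the inequality from the variational principle, combined with a uniform lower bound for the mass that $\max_{V}|\det Df^n|_V|$ puts on dynamical balls centred at Lyapunov‑regular points. Write $I_n:=\int_M\max_{V\subset T_zM}|\det Df^n_z|_V|\,dz$; since $h_{\top}(f)=\sup\{h_\mu(f):\mu\in\Proberg\}$, it suffices to show $\liminf_{n}\frac1n\log I_n\ge h_\mu(f)$ for each ergodic $\mu$. Fix $\mu$. As each $E^i$ is one‑dimensional, domination forces its (a.e.\ constant) Lyapunov exponent $\chi^i$ to satisfy $\chi^1<\cdots<\chi^l$, so the bundles with non‑positive exponent form an initial block $E^1\oplus\cdots\oplus E^{k^\ast}$; put $F(z):=E^u(z)\oplus E^l(z)\oplus\cdots\oplus E^{k^\ast+1}(z)$, the sum of $E^u$ with the positively‑expanding centre bundles (the stronger bundle of one of the hypothesised dominated splittings). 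By Oseledets' and Egorov's theorems and the Brin–Katok formula, for every small $\eta,\delta>0$ there is a set $Z=Z_{\eta,\delta}$ with $\mu(Z)>\tfrac12$ on which $\tfrac1j\log\|Df^j_x|_{E^i(x)}\|\to\chi^i$ and $-\tfrac1n\log\mu(B(x,n,\delta))\to h_\mu(f)$ uniformly.

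The technical core is the estimate
$$\int_{B(x,n,\delta)}\max_{V\subset T_zM}|\det Df^n_z|_V|\,dz\ \ge\ \int_{B(x,n,\delta)}|\det Df^n_z|_{F(z)}|\,dz\ \ge\ e^{-n\rho(\eta,\delta)}\qquad(x\in Z,\ n\text{ large}),$$
where $\rho(\eta,\delta)\to0$ as $\eta,\delta\to0$ (the first inequality is trivial, $F(z)$ being one admissible $V$). I would prove the second inequality through the local product structure of $B(x,n,\delta)$ adapted to the splitting: using (pseudo‑invariant) plaque families tangent to $F$ and to the complementary bundle $E^s\oplus E^1\oplus\cdots\oplus E^{k^\ast}$, the ball is — up to densities bounded away from $0$ and $\infty$ — a product of an $F$‑plaque $\Sigma\ni x$ with complementary plaques of volume $\gtrsim\delta^{\dim M-\dim F}e^{-o(n)}$. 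By Fubini the integral is then $\gtrsim\delta^{\dim M-\dim F}\,\mathrm{vol}_F(f^n\Sigma)$, using that the distortion of the continuous cocycle $z\mapsto\log|\det Df_z|_{F(z)}|$ along any plaque of diameter $\le\delta$ over $n$ steps is at most $e^{n\omega_F(\delta)}$ ($\omega_F$ a modulus of continuity). Finally $\mathrm{vol}_F(f^n\Sigma)\ge e^{-o(n)}$: the slice $\Sigma$ of the dynamical ball has $E^u$‑volume $\asymp\delta^{\dim E^u}/|\det Df^n_x|_{E^u(x)}|$ and width $\asymp\delta/\max_{0\le j<n}\|Df^j_x|_{E^i(x)}\|$ along each $E^i\subset F$, so $f^n$ sends it onto an $F$‑disk of volume $\asymp\delta^{\dim F}\prod_{i>k^\ast}\bigl(\|Df^n_x|_{E^i}\|/\max_{0\le j<n}\|Df^j_x|_{E^i}\|\bigr)$, and on $Z$ this product is $e^{\pm o(n)}$ because $\chi^i>0$, while on the $E^u$‑factor one uses the genuinely bounded backward distortion of the uniformly expanding bundle $E^u$.

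Granting the core estimate, fix small $\eta,\delta$ and let $E_n\subset Z$ be a maximal $(n,2\delta)$‑separated subset of $Z$. Maximality gives $Z\subset\bigcup_{x\in E_n}B(x,n,2\delta)$, so the uniform Brin–Katok bound yields $\tfrac12<\mu(Z)\le\#E_n\,e^{-n(h_\mu(f)-\eta)}$ for $n$ large, i.e.\ $\#E_n\ge\tfrac12 e^{n(h_\mu(f)-\eta)}$. The balls $B(x,n,\delta)$, $x\in E_n$, are pairwise disjoint, hence
$$I_n\ \ge\ \sum_{x\in E_n}\int_{B(x,n,\delta)}\max_{V}|\det Df^n_z|_V|\,dz\ \ge\ \#E_n\,e^{-n\rho(\eta,\delta)}\ \ge\ \tfrac12 e^{n(h_\mu(f)-\eta-\rho(\eta,\delta))},$$
so $\liminf_n\frac1n\log I_n\ge h_\mu(f)-\eta-\rho(\eta,\delta)$. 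Letting $\eta,\delta\to0$ gives $\liminf_n\frac1n\log I_n\ge h_\mu(f)$, and a supremum over ergodic $\mu$ finishes the proof.

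The main obstacle is the core estimate in $C^1$ regularity, where no bounded distortion is available. One must (i) extract honest, $n$‑uniform distortion bounds along the strong stable and unstable plaques from the uniform hyperbolicity of $E^s$ and $E^u$; (ii) replace bounded distortion of the centre Jacobians by the fact that, on a uniformly regular set, the quantities $\max_{0\le j<n}\|Df^j_x|_{E^i}\|$ and $\|Df^n_x|_{E^i}\|$ and their comparisons are only $e^{\pm o(n)}$, so that fixing the ``positive'' sub‑bundle $F$ once and for all (from the signs of the exponents of $\mu$) costs merely a sub‑exponential factor; and (iii) make the local product structure of $B(x,n,\delta)$ precise when the centre bundles are only non‑uniformly hyperbolic, i.e.\ build plaque families supporting a Fubini decomposition with densities bounded away from $0$ and $\infty$. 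This is precisely where the assumption $\dim E^i=1$ and the analysis of Section~\ref{The volume growth rate along subspaces} (and the estimate of Lemma~\ref{volume growth of dynamical ball}) enter.
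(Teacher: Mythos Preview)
Your overall architecture matches the paper's: reduce via the variational principle to a fixed ergodic $\mu$, establish a sub-exponential lower bound for $\int_{B(x,n,\delta)}|\det Df^n_z|_{F}|\,dz$ on a set of $\mu$-measure $>\tfrac12$, and then count via separated/spanning sets and a Katok-type entropy characterisation. Your counting step (Brin--Katok plus a maximal separated set) is equivalent to the paper's use of Lemma~\ref{Katok}.

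The gap is in the core estimate and, in particular, in your closing references. You point to Section~\ref{The volume growth rate along subspaces} and Lemma~\ref{volume growth of dynamical ball} as the place where the $C^1$ obstacles (i)--(iii) are resolved, but those tools are used for the \emph{opposite} inequality (bounding $h_{\top}$ from below by the volume growth); they give an \emph{upper} bound on the dynamical-ball integral, not the lower bound you need here. The relevant input is Lemma~\ref{volume growth of dynamical ball lower bound general version} (specialised in Corollary~\ref{volume growth of dynamical ball lower bound multi centers}), and its proof is quite different from your sketch. Rather than estimating $\mathrm{vol}_F(f^n\Sigma)$ through a product-of-widths formula and Lyapunov asymptotics on a uniformity set, the paper constructs an explicit nested family of $F$-disks $D_n\subset B(x,n,\delta)$ by tracking recurrence times to a Pesin-type block $\Omega_\varepsilon^N=\{x:\,m(Df^n_x|_{E^{cu}})\ge e^{-n\varepsilon/2}\ \forall\,n\ge N\}$ and \emph{prescribing} the radius of $f^n(D_n)$ step by step; this bypasses centre distortion entirely. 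A second, smaller issue: the paper puts the bundles with \emph{non-negative} exponent into $E^{cu}$ (not only the strictly positive ones, as you do), so that the complementary bundle has strictly negative exponents and the $C^1$ stable-manifold theorem of Abdenur--Bonatti--Crovisier applies to produce genuine local stable manifolds for the transverse ``thickening''. With your convention a zero-exponent direction would sit in the complement, and the plaque-family/Fubini step you describe is then not obviously available in $C^1$.
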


Let $h(f,\mu)$ denote the metric entropy of an invariant measure $\mu$. Proposition \ref{topological entropy bounded by liminf} above is a direct application of the following result.

\begin{Lemma}\label{metric entropy bounded by liminf}
	Let $f$ be a $C^{1}$ diffeomorphism on a compact manifold $M$. Assume there is a partially hyperbolic splitting $TM=E^{s}\oplus_{\prec} E^{1}\oplus_{\prec} E^{2}\cdots \oplus_{\prec}E^{l}\oplus_{\prec} E^{u}$ with $\dim E^{i}=1,\,1\leq i\leq l.$ Then for any ergodic measure $\mu$, $$h(f,\mu)\leq\liminf_{n\to\infty}\frac{1}{n}\log\int|\det Df_{x}^{n}|_{F^{k}}|\,d x$$  where $F^{k}=E^{k}\oplus E^{k+1}\oplus\cdots \oplus E^{l}\oplus E^{u}$ and $k$ is the smallest integer such that the Lyapunov exponent on $E^{k}$ is non-negative.
\end{Lemma}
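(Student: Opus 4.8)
The plan is to bound $h(f,\mu)$ by a Ruelle-type inequality restricted to the subbundle $F^k$, and then compare the pointwise sum of positive Lyapunov exponents to the integrated volume-growth rate. First I would fix an ergodic measure $\mu$ and let $\lambda_s(\mu) < \cdots$ be its Lyapunov exponents along the invariant subbundles $E^s, E^1, \dots, E^l, E^u$; by definition of the partially hyperbolic splitting the exponents along $E^s$ are all negative, those along $E^u$ positive, and the domination forces the exponent on $E^i$ to be (weakly) increasing in $i$. Let $k$ be the smallest index with $\lambda_k(\mu) \ge 0$, so that $F^k = E^k \oplus \cdots \oplus E^l \oplus E^u$ collects exactly the non-negative exponents. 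By the Ruelle inequality (in the $C^1$ setting, which holds since we only need the upper bound), $h(f,\mu) \le \sum_{i \ge k} \dim E^i \cdot \lambda_i(\mu) = \int \lim_{n\to\infty} \frac1n \log |\det Df^n_x|_{F^k}|\, d\mu(x)$, where the last equality is the Oseledets/subadditive-ergodic-theorem identification of the top Lyapunov sum of $F^k$ with the growth of the Jacobian of $Df^n$ restricted to $F^k$ (using that $F^k$ is $Df$-invariant and that its Oseledets exponents are exactly $\lambda_k, \dots, \lambda_u$, each positive, so the determinant grows at rate equal to their sum).

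Next I would pass from the $\mu$-integral of the pointwise growth rate to the $\liminf$ of the logarithm of the Lebesgue integral $\int |\det Df^n_x|_{F^k}|\, dx$. Set $\varphi_n(x) = \log|\det Df^n_x|_{F^k}|$; this is a continuous (hence bounded, by compactness of $M$) subadditive cocycle: $\varphi_{n+m}(x) \le \varphi_n(x) + \varphi_m(f^n x)$ up to an additive constant coming from the angle distortion between $F^k$ at different points (actually $F^k$ being invariant makes this exact). By Kingman's subadditive ergodic theorem, $\frac1n \varphi_n(x) \to \chi^+ := \int \lim \frac1n\varphi_n \, d\mu$ for $\mu$-a.e.\ $x$, and moreover $\chi^+ = \inf_n \frac1n \int \varphi_n\, d\mu$. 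The key elementary inequality I would then invoke is: for any continuous function $\psi$ on $M$ and any invariant measure $\mu$, $\int \psi\, d\mu \le \limsup_{n} \frac1n \log \int e^{S_n \psi(x)}\, dx$ where $S_n\psi$ is the Birkhoff sum — but here I instead want the version for the subadditive sequence $\varphi_n$ directly: $\int \frac1n \varphi_n\, d\mu \le \frac1n \log \int e^{\varphi_n(x)}\, dx + o(1)$. This follows from Jensen's inequality applied to $\log$, using that Lebesgue measure on $M$ can be normalized to a probability measure and that $\mu$ has a bounded density with respect to... no — $\mu$ need not be absolutely continuous, so the honest route is: $\int \varphi_n \, d\mu \le \sup_x \varphi_n(x)$ is too weak; instead use that for the exponential of a continuous function, $\frac1n\log\int e^{\varphi_n}dx \ge \frac1n \int \varphi_n \, d\nu_n$ for the measure $d\nu_n = e^{\varphi_n}dx / \int e^{\varphi_n}dx$, and let $\nu_n$ (or Cesàro averages of its pushforwards) converge weakly to an invariant measure $\nu$; a variational-principle-type argument then gives $\liminf \frac1n \log\int e^{\varphi_n}dx \ge \int \chi^+_\nu \, d\nu$ where $\chi^+_\nu$ is the subadditive limit for $\nu$. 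To get the bound for *our* $\mu$ rather than some other $\nu$, I would instead directly estimate $\int e^{\varphi_n} dx$ from below by integrating over a small dynamical ball around a $\mu$-generic point: by Lemma \ref{volume growth of dynamical ball} (cited in the introduction as giving a lower bound on $\int_{B(x,n,\delta)} |\det Df^n_z|_{F^k}|\,dz$), this ball contributes at least $c \cdot e^{\varphi_n(x) - o(n)}$, and averaging $\varphi_n(x)/n \to \chi^+$ over $\mu$-generic $x$ yields $\liminf \frac1n\log\int e^{\varphi_n}dx \ge \chi^+ \ge h(f,\mu)$.

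The main obstacle I anticipate is precisely this last comparison step: turning a statement about the $\mu$-average of a subadditive cocycle into a statement about the logarithm of a Lebesgue integral, when $\mu$ may be singular with respect to Lebesgue. The clean way around it is the dynamical-ball lower bound — one shows that a single dynamical ball $B(x,n,\delta)$ at a $\mu$-typical point already has $\int_{B(x,n,\delta)}|\det Df^n_z|_{F^k}|\,dz$ bounded below (up to sub-exponential factors) by $|\det Df^n_x|_{F^k}|$ times the Lebesgue measure of a ball of fixed radius $\delta$, because along $F^k$ the map is expanding in the Oseledets sense and the distortion on the ball is controlled. Since $\frac1n\log|\det Df^n_x|_{F^k}| \to \chi^+$ for $\mu$-a.e.\ $x$ and $\chi^+ \ge h(f,\mu)$ by Ruelle, we conclude $\liminf_n \frac1n\log\int|\det Df^n_x|_{F^k}|\,dx \ge \chi^+ \ge h(f,\mu)$. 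A secondary technical point worth stating carefully is the validity of the Ruelle inequality in the $C^1$ category: the upper bound $h(f,\mu) \le \sum_{\lambda_i>0}\dim E^i\,\lambda_i(\mu)$ holds for $C^1$ maps (it does not require $C^{1+\alpha}$; only Pesin's lower bound / the equality case does), and the restriction to $F^k$ rather than the full positive-exponent subspace is harmless since $E^k$ already carries the smallest non-negative exponent by the ordering forced by domination. Once these two ingredients are in place the lemma follows, and Proposition \ref{topological entropy bounded by liminf} follows by taking the supremum over ergodic $\mu$ together with the fact (from the general result of Section \ref{The volume growth rate along subspaces}) that $\max_{V\subset T_xM}|\det Df^n_x|_V| $ is comparable to $|\det Df^n_x|_{F^k}|$ for the relevant $k$.
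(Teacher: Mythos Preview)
Your approach has a genuine gap at the comparison step. You factor through Ruelle's inequality, $h(f,\mu)\le\chi^+(\mu):=\sum_{\lambda_i\ge 0}\lambda_i\dim E^i$, and then try to establish $\chi^+(\mu)\le\liminf_{n}\frac1n\log\int_M|\det Df^n_x|_{F^k}|\,dx$. This second inequality is \emph{false} in general: by Theorem~\ref{entropy of partially hyperbolic systems with multi 1D centers} the right-hand side equals $h_{\top}(f)$, whereas $\chi^+(\mu)$ can easily exceed $h_{\top}(f)$ (e.g.\ for $\mu=\delta_p$ at a periodic point $p$ with large unstable Jacobian one has $h(f,\mu)=0$ while $\chi^+(\mu)$ is large). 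The concrete error is your single-ball estimate $\int_{B(x,n,\delta)}|\det Df^n_z|_{F^k}|\,dz\gtrsim |\det Df^n_x|_{F^k}|\cdot\delta^{d}$: the dynamical ball is exponentially thin precisely in the expanding $F^k$ directions, so its Lebesgue measure is of order $\delta^d e^{-n\chi^+}$, not $\delta^d$. The exponential growth of the Jacobian and the exponential decay of the ball volume cancel, leaving only the sub-exponential lower bound $e^{-n\varepsilon}$ of Corollary~\ref{volume growth of dynamical ball lower bound multi centers}. (Incidentally, Lemma~\ref{volume growth of dynamical ball}, which you cite, is the \emph{upper} bound on this integral; the lower bound is Lemma~\ref{volume growth of dynamical ball lower bound general version}/Corollary~\ref{volume growth of dynamical ball lower bound multi centers}.)

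The paper's proof bypasses $\chi^+$ altogether: entropy enters not through the size of the Jacobian on one ball but through the \emph{number} of disjoint balls. One takes a minimal $(n,2\delta)$-spanning set $S$ of a set $\Omega$ with $\mu(\Omega)>\tfrac12$; the balls $\{B(y,n,\delta):y\in S\}$ are pairwise disjoint and each contributes at least $e^{-n\varepsilon}$ to $\int_M|\det Df^n|_{F^k}|$ by Corollary~\ref{volume growth of dynamical ball lower bound multi centers}, so the full integral is $\ge\#S\cdot e^{-n\varepsilon}$. Since $\#S\ge S_{1/2}(n,2\delta)$, Katok's Lemma~\ref{Katok} converts $\liminf_n\frac1n\log\#S$ into $h(f,\mu)$, and the conclusion follows by letting $\varepsilon,\delta\to 0$.
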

Theorem 1.2 in \cite{GLS18} gives a similar upper bound where the volume growth was established with `$\limsup$'. Here we improve their result by using `$\liminf$' for partially hyperbolic systems with multi 1-D centers. Similar ideas can also be found in \cite{Koz98} for $C^{1+\alpha}$ ($\alpha>0$) systems (see Section 3.3 in \cite{Koz98}).

We postpone the proof of Lemma \ref{metric entropy bounded by liminf}. As a consequence of Lemma \ref{metric entropy bounded by liminf}, we first prove Proposition \ref{topological entropy bounded by liminf}.
\begin{proof}[Proof of Proposition \ref{topological entropy bounded by liminf}]
	First we note that for any $1\leq k\leq l$ and $x\in M$, $$|\det Df_{x}^{n}|_{F^{k}}|\leq \max_{V\subset T_{x}M}|\det Df_{x}^{n}|_{V}|.$$ 
	Recall the variational principal (see \cite[Theorem 8.6]{Wal82}): $$h_{\top}(f)=\sup_{\mu}h(f,\mu)$$ where the supreme is taken over all ergodic measures.
	Then the conclusion is a direct consequence of Lemma \ref{metric entropy bounded by liminf}.
\end{proof}

Now the rest of this subsection is devoted to the proof of Lemma \ref{metric entropy bounded by liminf}.

Let $B(x,\delta)$ be the ball in $M$ centered at $x$ with radius $\delta$. We define the \emph{dynamical ball} $$B(x,n,\delta)=\bigcap_{i=0}^{n-1}f^{-i}(B(f^{i}(x),\delta)).$$

Assume there is a dominated splitting $TM=E\oplus_{\prec}F$. Given $\alpha>0$, we define the cone along $F$ with width $\alpha$:

$$\mathcal{C}^{\alpha}_{F}=\{v_{E}+v_{F}\,|\,x\in M, v_{E}\in E_{x}, v_{F}\in F_{x}, ||v_{E}||\leq \alpha\cdot||v_{F}||\}.$$

A sub-manifold $\gamma$ with $\dim \gamma=\dim F$ is called \emph{tangent to the cone} $\mathcal{C}^{\alpha}_{F}$ if for any $x\in\gamma$, $T_{x}\gamma\subset \mathcal{C}^{\alpha}_{F}$.

We next give a key estimation on the lower bound of the volume growth of dynamical balls.

\begin{Lemma}\label{volume growth of dynamical ball lower bound general version}
	Let $f$ be a $C^{1}$ diffeomorphism on a compact manifold $M$. Assume there is a dominated splitting $TM=E\oplus_{\prec} E^{cu}$. Let $\mu$ be an ergodic measure such that the Lyapunov exponents on $E$ are negative and the Lyapunov exponents on $E^{cu}$ are non-negative. Then for any $\varepsilon,\delta>0$,
	there are a subset $\Omega$ with $\mu(\Omega)>\frac{1}{2}$ and an integer  $N_{0}$ such that for any $x\in\Omega$, 
	$$\int_{B(x,n,\delta)} |\det Df^{n}_{z}|_{E^{cu}}|\,dz\geq e^{-n\varepsilon}, \quad n\geq N_{0}.$$
\end{Lemma}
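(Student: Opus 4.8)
The plan is to estimate the volume of the $E^{cu}$-image of a dynamical ball from below by pushing forward a carefully chosen disk tangent to a cone around $E^{cu}$, using that the $E^{cu}$-exponents are non-negative so that $Df^n|_{E^{cu}}$ does not contract too much, while the $E$-direction is uniformly dominated so it cannot destroy the transversality. Concretely, fix $\varepsilon,\delta>0$. By Oseledets, for $\mu$-a.e. $x$ the Lyapunov exponents along $E^{cu}$ exist and are $\geq 0$; the \emph{sum} of those exponents equals $\lim_n \frac{1}{n}\log|\det Df^n_x|_{E^{cu}}|\geq 0$. By Egorov's theorem, there is a measurable set $\Omega_0$ with $\mu(\Omega_0)>1-\tfrac14$ and an integer $N_1$ such that $\frac{1}{n}\log|\det Df^n_x|_{E^{cu}}|\geq -\varepsilon/2$ for all $x\in\Omega_0$ and $n\geq N_1$, i.e. $|\det Df^n_x|_{E^{cu}}|\geq e^{-n\varepsilon/2}$.

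**Next I would** construct, for each $x\in\Omega_0$, a small embedded $\dim E^{cu}$-disk $\gamma_x$ through $x$ tangent to a thin cone $\mathcal C^\alpha_{E^{cu}}$, of intrinsic radius $\rho$ (a uniform constant depending only on $\delta$ and the geometry of $M$, chosen so that forward iterates of such disks stay tangent to the cone — this is the standard cone-invariance under a dominated splitting). The key point is a bounded-distortion / Fubini-type estimate: the volume of $f^n(\gamma_x\cap B(x,n,\delta))$ measured in $M$ is, up to a uniform constant $C_1=C_1(\delta,\alpha)$, at least $\int_{\gamma_x\cap B(x,n,\delta)}|\det Df^n_z|_{T_z\gamma_x}|\,dz$, and since $T_z\gamma_x$ lies in a thin cone around $E^{cu}$, the Jacobian $|\det Df^n_z|_{T_z\gamma_x}|$ is comparable (within a factor that is sub-exponential, even bounded, because $E^{cu}$ is the \emph{dominating} bundle) to $|\det Df^n_z|_{E^{cu}_z}|$. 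Then I would integrate the inequality $\int_{B(x,n,\delta)}|\det Df^n_z|_{E^{cu}}|\,dz \geq c\cdot\mathrm{vol}(\gamma_x\cap B(x,n,\delta))\cdot(\text{min Jacobian along }E^{cu})$ — but more cleanly, relate $\int_{B(x,n,\delta)}|\det Df^n_z|_{E^{cu}}|\,dz$ to $\int_{\gamma_x} |\det Df^n_z|_{E^{cu}_z}|\,dz$ via the coarea formula applied to the $E$-foliation-like structure, and bound the latter below by $\mathrm{vol}(\gamma_x)\cdot\inf_z |\det Df^n_z|_{E^{cu}_z}|$.

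**The main obstacle** — and where the non-negativity hypothesis really does the work — is controlling $\inf_{z\in\gamma_x}|\det Df^n_z|_{E^{cu}_z}|$ uniformly over the small disk $\gamma_x$, not just at the center $x$ where Egorov gives $e^{-n\varepsilon/2}$. Here one cannot simply invoke continuity of the Oseledets data (it is only measurable), so the resolution is to use the Birkhoff-type argument along the \emph{orbit inside the dynamical ball}: because $z\in B(x,n,\delta)$, the orbit of $z$ shadows the orbit of $x$ within $\delta$, and the Jacobian $\log|\det Df^n_z|_{E^{cu}}|$ is a Birkhoff sum of the continuous function $z\mapsto \log|\det Df_z|_{E^{cu}_z}|$ along that orbit; a $\delta$-shadowing orbit has Birkhoff sum within $n\cdot\omega(\delta)$ of that of $x$, where $\omega(\delta)\to 0$ as $\delta\to 0$ by uniform continuity. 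Shrinking $\delta$ (which is allowed, since the statement quantifies over a given $\delta$ — actually we must handle the given $\delta$, so instead I would prove it first for small $\delta$ and note the integral is monotone non-decreasing in $\delta$, hence the bound for small $\delta$ implies it for all larger $\delta$). This gives $|\det Df^n_z|_{E^{cu}}|\geq e^{-n\varepsilon/2}\cdot e^{-n\omega(\delta)}\geq e^{-n\varepsilon}$ for $\delta$ small. Setting $\Omega=\Omega_0$ (measure $>1-1/4>1/2$) and $N_0=\max\{N_1, \text{distortion threshold}\}$, and absorbing the uniform constants $c, C_1, \mathrm{vol}(\gamma_x)$ into a further increase of $N_0$ (they are sub-exponential — in fact constant — so they cost nothing asymptotically), yields $\int_{B(x,n,\delta)}|\det Df^n_z|_{E^{cu}}|\,dz\geq e^{-n\varepsilon}$ for all $x\in\Omega$ and $n\geq N_0$, as desired.
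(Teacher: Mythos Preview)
There is a genuine gap in your argument, and it occurs precisely at the step where you ``bound the latter below by $\mathrm{vol}(\gamma_x)\cdot\inf_z |\det Df^n_z|_{E^{cu}_z}|$''. The problem is that your fixed-radius disk $\gamma_x$ is \emph{not} contained in the dynamical ball $B(x,n,\delta)$ when $E^{cu}$ carries positive Lyapunov exponents: in those directions the dynamical ball has width of order $e^{-n\chi}\delta$. So the Fubini/coarea step only lets you use $\gamma_x\cap B(x,n,\delta)$, whose volume may decay like $e^{-n\chi\dim E^{cu}}$. Multiplying this by your pointwise Jacobian lower bound $e^{-n\varepsilon}$ gives an exponentially small quantity, not $e^{-n\varepsilon}$. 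Your shadowing argument for the Jacobian is correct but only applies to $z\in B(x,n,\delta)$, so it cannot be used on points of $\gamma_x\setminus B(x,n,\delta)$ either; the two alternatives you sketch are both blocked by the same issue.

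The paper avoids this trap by never separating the integral into (integrand)$\times$(volume of domain). Instead it uses that $\int_{D}|\det Df^n|_{T\gamma}|=\mathrm{vol}(f^n(D))$ and constructs, for each $n$, a specific sub-disk $D_n\subset\gamma_x\cap B(x,n,\delta)$ whose \emph{image} $f^n(D_n)$ has radius at least $C^{-r_x(n)}e^{-(n-r_x(n))\varepsilon}\cdot C^{-N}\delta$, where $r_x(n)$ counts how long the orbit spends outside a good set. The construction needs control of the \emph{minimum norm} $m(Df^n|_{E^{cu}})$ (each individual exponent is $\geq 0$), not merely the determinant, because one must check that intermediate iterates $f^j(D_n)$ stay inside $B(f^j(x),\delta)$; this is done via a return-time decomposition to the set $\Omega^N_\varepsilon=\{x: m(Df^n_x|_{E^{cu}})\geq e^{-n\varepsilon/2},\ \forall n\geq N\}$. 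A second ingredient you gloss over is the thickness in the $E$-direction: since $E$ is only \emph{nonuniformly} contracting here, one needs the $C^1$ Pesin-type stable manifold theorem of Abdenur--Bonatti--Crovisier to produce a local stable leaf $W^s_\tau(x)\subset B(x,n,\delta)$ of uniform size on a set of measure $>\tfrac12$, before Fubini can be applied.
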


\begin{proof}
	Given a small number $\varepsilon>0$(much smaller than the gaps between the Lyapunov exponents of $\mu$), define $$\Omega_{\varepsilon}^{N}=\{x\in M\,|\,m(Df_{x}^{n}|_{E^{cu}})\geq e^{-\frac{n\varepsilon}{2}},\forall\, n\geq N\}$$ where $m(A)$ denotes the minimum norm of the linear map $A$, i.e., $m(A)=\inf_{v}\frac{||A(v)||}{||v||}$.
	
	We note that $\mu(\Omega_{\varepsilon}^{N})\to 1$ as $N\to +\infty$.

	
	For $\mu$ almost every point $x\in\Omega_{\varepsilon}^{N}$, we define two increasing sequence of integers $n_{0}<n_{1}<n_{2}<\cdots$ and $r_{0}\leq r_{1}\leq r_{2}\leq\cdots$ inductively: 
	\begin{itemize}
		\item Define $r_{0}=n_{0}=0$,
		\item Suppose that $r_{i}$ and $n_{i}$ have been defined. If $f^{n_{i}}(x)\in \Omega_{\varepsilon}^{N}$, we then define $n_{i+1}=n_{i}+N$ and $r_{i+1}=r_{i}$. If $f^{n_{i}}(x)\notin \Omega_{\varepsilon}^{N}$, we define $n_{i+1}=n_{i}+r$ and $r_{i+1}=r_{i}+r$ where $r$ is the smallest positive integer such that $f^{n_{i}+r}(x)\in\Omega_{\varepsilon}^{N}$.
	\end{itemize}
\begin{figure}[h]
	\centering
	\includegraphics[width=0.80\textwidth]{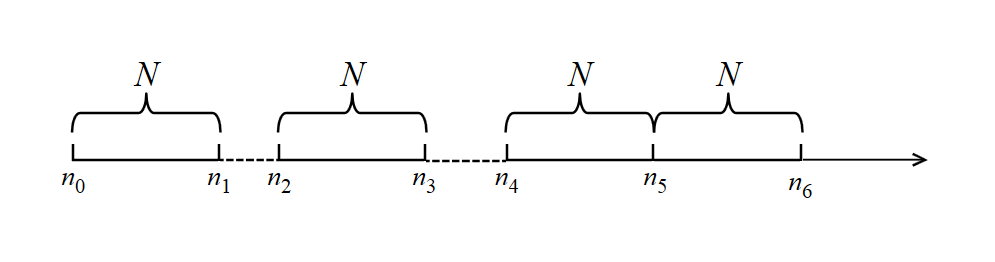}
	\caption{Definition of $n_{i}$}
\end{figure}

	Let $$j(n)=\max\{n_{i}\,|\,n_{i}\leq n\},\quad r_{x}(n)=N+n-j(n)+\max\{r_{i}\,|\,n_{i}\leq n\}.$$ 
	
	We note by definition, $$r_{x}(n)\leq N+\#\{i\leq n\,|\, f^{i}(x)\notin\Omega_{\varepsilon}^{N}\}.$$
	
	Let $\delta_{0}$ be a small number such that for any $z\in M$, the exponential map $\exp_{z}$ is a local diffeomorphism.

	For each $n\in\mathbb{N}$ and $\tau \leq\delta$, we use $\mathcal{W}(f^{n}(x),\tau)$ to denote the connected component in the ball $B(f^{n}(x),\tau)$ of $$ f^{n}\big(\exp_{x}(E^{cu}_{x}(\delta))\big)$$ which contains $f^{n}(x)$. Let $C=\max_{x\in M}||Df_{x}||$. We define $$D_{n}=f^{-n}(\mathcal{W}(f^{n}(x),C^{-r_{x}(n)}\cdot C^{-N}\delta \cdot e^{-(n-r_{x}(n))\varepsilon})).$$

By domination, there are some $T\in\mathbb{N}$ and some small number $\alpha>0$ (independent of $x$) such for any embedded sub-manifold $\gamma$ tangent to the cone $\mathcal{C}^{\alpha}_{E^{cu}}$ and any $k\geq T$, $f^{k}(\gamma)$ is also tangent to $\mathcal{C}^{\alpha}_{E^{cu}}$. By shrinking $\delta_{0}$ if necessary(independent of $x$), we can assume for any $x$ and its corresponding $D_{n}$, $f^{k}(D_{n})$ is tangent to the small cone $\mathcal{C}^{\alpha}_{E^{cu}}$ for $k\in\mathbb{N}$. Moreover we also assume that for any $\delta\leq\delta_{0}$, any $\gamma$ tangent to $\mathcal{C}^{\alpha}_{E^{cu}}$, any $k\in\mathbb{N}$ and any $y,z\in \gamma$, if for any $0\leq i\leq k-1$, $d(f^{i}(y),f^{i}(z)\leq\delta$, then
\begin{equation}\label{expansion error}
\frac{m(Df^{k}|_{T_{y}\gamma})}{m(Df^{k}_{z}|_{E^{cu}})}\geq e^{-\frac{k\varepsilon}{2}}.
\end{equation}
	\begin{Claim}
		For each $x\in\Omega_{\varepsilon}^{N}$ and each $n$, $D_{n}\subset B(x,n,\delta)\cap\exp_{x}(E^{cu}_{x}(\delta))$.
	\end{Claim}		
	\begin{proof}
		We first show that 	$D_{j}\subset D_{n_{i}}$ for each $i$ and each $j\geq n_{i}$. We prove it with two steps:
		\begin{enumerate}
			\item $D_{n_{i+1}}\subset D_{n_{i}}$ for each $i$,
			
			\item $D_{j}\subset D_{n_{i}}$ for each $n_{i}\leq j<n_{i+1}$.
			
		\end{enumerate}
		For the first property, it is sufficient to show that $f^{n_{i+1}-n_{i}}(f^{n_{i}}(D_{n_{i}}))$ covers $f^{n_{i+1}}(D_{n_{i+1}})$, i.e., $f^{n_{i+1}}(D_{n_{i+1}})\subset f^{n_{i+1}-n_{i}}(f^{n_{i}}(D_{n_{i}}))$. By definition of $r_{x}(n)$, the radius of $f^{n_{i}}(D_{n_{i}})$ is $$C^{-r_{x}(n_{i})}\cdot C^{-N}\delta \cdot e^{-(n_{i}-r_{i}-N)\varepsilon}$$ and the radius of $f^{n_{i+1}}(D_{n_{i+1}})$ is $$C^{-r_{x}(n_{i+1})}\cdot C^{-N}\delta \cdot e^{-(n_{i+1}-r_{i+1}-N)\varepsilon}.$$ If $f^{n_{i}}(x)\in \Omega_{\varepsilon}^{N}$, then by definition,
		
		\begin{itemize}
			\item $n_{i+1}-n_{i}=N, r_{i+1}-r_{i}=0$ and consequently, $r_{x}(n_{i})=r_{x}(n_{i+1})$,
			\item the expansion rate of $f^{n_{i+1}-n_{i}}$ (which is $f^{N}$) on $f^{n_{i}}(D_{n_{i}})$ is at least $e^{-N\varepsilon}$. Indeed, we note by definition that for any point $y$ in $f^{n_{i}}(D_{n_{i}})$ and any $0\leq j\leq N-1$, $d(f^{j}(f^{n_{i}}(x)),f^{j}(y))\leq\delta$. We then conclude by Equation (\ref{expansion error}) and the fact $f^{n_{i}}(x)\in \Omega_{\varepsilon}^{N}$.
		\end{itemize}
		As a consequence, the radius of $f^{n_{i+1}-n_{i}}(f^{n_{i}}(D_{n_{i}}))$ is at least larger than the radius of $f^{n_{i}}(D_{n_{i}})$ times the factor $e^{-N\varepsilon}$ (which is exactly the radius of $f^{n_{i+1}}(D_{n_{i+1}})$). By comparing the radii, we conclude that $f^{n_{i+1}-n_{i}}(f^{n_{i}}(D_{n_{i}}))$ covers $f^{n_{i+1}}(D_{n_{i+1}})$. 
		
		Now for the case that $f^{n_{i}}(x)\notin \Omega_{\varepsilon}^{N}$, by definition, we have 
		\begin{itemize}
			\item $n_{i+1}-n_{i}=r_{i+1}-r_{i}=r_{x}(n_{i+1})-r_{x}(n_{i})$,
			\item the contraction rate of $f^{n_{i+1}-n_{i}}$ on $f^{n_{i+1}}(D_{n_{i+1}})$ is at most $C^{-(r_{i+1}-r_{i})}$.
		\end{itemize}
		Again by comparing the radii, we then similarly conclude  that $f^{n_{i+1}-n_{i}}(f^{n_{i}}(D_{n_{i}}))$ covers $f^{n_{i+1}}(D_{n_{i+1}})$.
		
		To see the second property, for any $n_{i}\leq j<n_{i+1}$, by definition, we note that the difference between the two radii of $f^{n_{i}}(D_{n_{i}})$ and $f^{j}(D_{j})$ is a factor $C^{-(j-n_{i})}$ which is the maximal contraction for $f^{j-n_{i}}$. We then get that $f^{j-n_{i}}(f^{n_{i}}(D_{n_{i}}))$ covers $f^{j}(D_{j})$. This implies $D_{j}\subset D_{n_{i}}$.

		Now we have shown that $D_{j}\subset D_{n_{i}}$ for each $i$ and each $j\geq n_{i}$. As a direct consequence, it gives that for any $n$, any $y\in D_{n}$, any $n_{i}\leq n$ and any $n_{i}\leq j\leq \min\{n,n_{i}+r_{i+1}-r_{i}\}$, 
		\begin{equation}\label{bowen property}
		d(f^{j}(x),f^{j}(y))\leq C^{-N}\delta.
		\end{equation}
		To show $D_{n}\subset B(x,n,\delta)\cap\exp_{x}(E^{cu}_{x}(\delta))$, it is sufficient to verify that for any $y\in D_{n}$ and any $0\leq j\leq n$, 
		$d(f^{j}(x),f^{j}(y))\leq \delta$. By the inequality (\ref{bowen property}), it remains to check the property for each $n_{i}<j<n_{i+1}$ in the case of $n_{i+1}-n_{i}=N$ with $f^{n_{i}}(x)\in\Omega_{\varepsilon}^{N}$. Indeed, this is guaranteed by the fact that $d(f^{n_{i}}(x),f^{n_{i}}(y))\leq C^{-N}\delta.$ The proof of the claim is complete.

	\end{proof}
	Given a number $\delta>0$, we write $\gamma_{x}=B(x,n,\delta)\cap\exp_{x}(E^{cu}_{x}(\delta))$ for short.	Write $d=\dim M$.
	\begin{Claim}
		For any $0<\delta\leq\delta_{0}$, there are $N_{0},N_{1}\in\mathbb{N}$ and a subset $\widetilde{\Omega}_{\varepsilon}\subset \Omega_{\varepsilon}^{N_{1}}$ with $\mu(\widetilde{\Omega}_{\varepsilon})>\frac{1}{2}$ such that for every point $x\in\widetilde{\Omega}_{\varepsilon}$, $$\int_{\gamma_{x}} |\det Df^{n}|_{T_{t}\gamma_{x}}|\,dt\geq  e^{-3dn\varepsilon}, \quad n\geq N_{0}.$$
	\end{Claim}
	\begin{proof}
		For a recurrent point $x\in\Omega_{\varepsilon}^{N}$, by the previous claim, we have
		$$\begin{aligned}
		\int_{\gamma_{x}} |\det Df^{n}|_{T_{t}\gamma_{x}}|\,dt
		&\geq \int_{D_{n}} |\det Df^{n}|_{T_{t}\gamma_{x}}|\,dt\\
		&={\rm Vol}(\mathcal{W}(f^{n}(x),C^{-r_{x}(n)}\cdot C^{-N}\delta \cdot e^{-(n-r_{x}(n))\varepsilon}))\\
		&\geq \bigg(C^{-r_{x}(n)}\cdot C^{-N}\delta \cdot e^{-(n-r_{x}(n))\varepsilon}\bigg)^{\dim E^{cu}}.
		\end{aligned}$$	
		Recall that $r_{x}(n)\leq N+\#\{i\leq n\,|\, f^{i}(x)\notin\Omega_{\varepsilon}^{N}\}$ and $\mu(\Omega_{\varepsilon}^{N})\to 1$ as $N\to +\infty$. Also note that by recurrence (ergodicity), for $\mu$ almost every $x\in \Omega_{\varepsilon}^{N}$, $$\lim_{n\to+\infty}\frac{\#\{i\leq n\,|\, f^{i}(x)\in\Omega_{\varepsilon}^{N}\}}{n}= \mu(\Omega_{\varepsilon}^{N}).$$
		
		As a consequence, for $\mu$ almost every $x\in \Omega_{\varepsilon}^{N}$, $$\limsup_{n\to+\infty}\frac{r_{x}(n)}{n}\leq 1-\mu(\Omega_{\varepsilon}^{N}).$$
		
		Hence we can choose $ N_{1}$ large enough such that the set  $$\widetilde{\Omega}^{N_{1}}_{\varepsilon}=\{x\in \Omega_{\varepsilon}^{N_{1}}\,|\, \lim_{n\to +\infty}\bigg(C^{-r_{x}(n)}\cdot e^{-(n-r_{x}(n))\varepsilon}\bigg)^{\dim E^{cu}}\geq  e^{-2dn\varepsilon}\}$$ carries $\mu$-measure larger than $\frac{2}{3}$. We then choose $N_{0}$ large enough and a subset $\widetilde{\Omega}_{\varepsilon}\subset \widetilde{\Omega}_{\varepsilon}^{N_{1}}$ with $\mu(\widetilde{\Omega}_{\varepsilon})>\frac{1}{2}$ such that for any $x\in\widetilde{\Omega}_{\varepsilon}$, $$\bigg(C^{-r_{x}(n)}\cdot C^{-N_{1}}\delta \cdot e^{-(n-r_{x}(n))\varepsilon}\bigg)^{\dim E^{cu}}\geq e^{-3dn\varepsilon},\quad\forall\, n\geq N_{0}.$$ We complete the proof.
		
	\end{proof}

	Recall that we assume the Lyapunov exponents of $\mu$ on $E$ are negative. For a $C^{1}$ diffeomorphism with dominated splitting, F. Abdenur, C. Bonatti and S. Crovisier gives a non-uniform version of stable manifold theorem (see Proposition 8.9 in \cite{ABC11}) which states that for $\mu$ almost every point $x$, there is a local stable manifold $W^{s}_{\rm loc}(x)$ whose dimension is $\dim E$. Let $W^{s}_{\tau}(x)$ denote the local stable manifold with radius $\tau>0$. We note that the radii of these local stable manifolds might not be uniformly bounded from below (like Pesin theory in $C^{1+\alpha}$ setting). We choose a number $0<\tau\ll \delta$ small enough and a subset $\Omega\subset \widetilde{\Omega}_{\varepsilon}$ with $\mu$-measure larger than $\frac{1}{2}$ such that for each $x\in\Omega$, 
	\begin{itemize}
		\item the local stable manifold $W^{s}_{\rm loc}(x)$ has radius at least $\tau$, i.e., $W^{s}_{\tau}(x)\subset W^{s}_{\rm loc}(x)$.
		\item For any $n\in\mathbb{N}$, $W^{s}_{\tau}(x)\subset B(x,n,\delta)$.
	\end{itemize}	
	
	For $x\in\Omega$, we now consider the local foliation  $$\mathcal{F}_{x}=\{\exp_{x}\bigg(\exp^{-1}_{x}(z)+E^{cu}_{x}(\delta)\bigg)\,|\, z\in W^{s}_{x}(\tau)\}.$$
	
	For simplicity, we write $\gamma_{x}(z)=B(x,n,\delta)\cap\exp_{x}\big(\exp^{-1}_{x}(z)+E^{cu}_{x}(\delta)\big)$. 
	We note that by shrinking $\delta_{0}$ if necessary, we can assume that for any $\delta\leq\delta_{0}$, any $z\in B(x,n,\delta)$ and any $t\in\gamma_{x}$,	
	\begin{itemize}
		\item there is a smooth diffeomorphism $h_{z}:\gamma_{x}\to \gamma_{x}(z)$ with $ |\det(Dh_{z})|\geq \frac{1}{2}$,
		\item since $\gamma_{x}\subset B(x,n,\delta)$, $$\frac{|\det Df^{n}_{z}|_{E^{cu}}|}{|\det Df^{n}_{t}|_{T_{t}}\gamma_{x}|}\geq e^{-n\varepsilon}.$$
	\end{itemize}

	Applying Fubini's theorem to the foliation $\mathcal{F}_{x}$, by the above claim, we get that for any $n\geq N_{0}$,	
	$$\begin{aligned}
	\int_{B(x,n,\delta)} |\det Df^{n}_{z}|_{E^{cu}}|\,dz
	&\geq\int_{ W^{s}_{\tau}(x)}\bigg(\int_{\gamma_{x}(s)} |\det Df^{n}_{t}|_{E^{cu}}|\,dt\bigg)\,ds\\
	&\geq\frac{1}{2} e^{-n\varepsilon}\int_{W^{s}_{\tau}(x)}\bigg(\int_{\gamma_{x}} |\det Df^{n}_{t}|_{T_{t}\gamma_{x}}|\,dt\bigg)ds\\
	&\geq \frac{1}{2}\tau^{\dim E}\cdot e^{-4dn\varepsilon}.
	\end{aligned}$$	
	Replacing $\varepsilon$ by $\frac{\varepsilon}{4d}$ and increasing $N_{0}$ if necessary, we can assume that for any $x\in\Omega$, $$\int_{B(x,n,\delta)} |\det Df^{n}_{z}|_{E^{cu}}|dz\geq e^{-n\varepsilon}\,,\quad\forall\, n\geq N_{0}.$$
	
	We get the desired estimation for all $\delta$ small enough. Since here we are considering a lower bound, this is true for all $\delta>0$. We complete the proof.
\end{proof}
A direct application of Lemma \ref{volume growth of dynamical ball lower bound general version} on our situation is the following:
\begin{Corollary}\label{volume growth of dynamical ball lower bound multi centers}
	Let $f$ be a $C^{1}$ diffeomorphism on a compact manifold $M$. Assume there is a partially hyperbolic splitting $TM=E^{s}\oplus_{\prec} E^{1}\oplus_{\prec} E^{2}\cdots \oplus_{\prec}E^{l}\oplus_{\prec} E^{u}$ with $\dim E^{i}=1,\,1\leq i\leq l.$ Then for any $\varepsilon,\delta>0$ and any ergodic measure $\mu$,
	there are a subset $\Omega$ with $\mu(\Omega)>\frac{1}{2}$ and an integer  $N_{0}$ such that for any $x\in\Omega$, 
	$$\int_{B(x,n,\delta)} |\det Df^{n}_{z}|_{F^{k}}|\,dz\geq e^{-n\varepsilon}, \quad n\geq N_{0}$$  where $F^{k}=E^{k}\oplus E^{k+1}\oplus\cdots \oplus E^{l}\oplus E^{u}$ and $k$ is the smallest integer such that the Lyapunov exponent of $\mu$ on $E^{k}$ is non-negative(write $k=u$ if the Lyapunov exponent on $E^{l}$ is still negative).
\end{Corollary}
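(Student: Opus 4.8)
The plan is to deduce the Corollary directly from Lemma~\ref{volume growth of dynamical ball lower bound general version}, by coarsening the partially hyperbolic splitting into a two-piece dominated splitting adapted to $\mu$. Fix the ergodic measure $\mu$ and let $k$ be as in the statement. Set $E=E^{s}\oplus E^{1}\oplus\cdots\oplus E^{k-1}$ and $E^{cu}=F^{k}=E^{k}\oplus E^{k+1}\oplus\cdots\oplus E^{l}\oplus E^{u}$, with the conventions that $E=E^{s}$ when $k=1$, and $E=E^{s}\oplus E^{1}\oplus\cdots\oplus E^{l}$, $E^{cu}=E^{u}$ when $k=u$. That $TM=E\oplus_{\prec}E^{cu}$ is a dominated splitting is exactly the $i=k-1$ clause in the definition of partial hyperbolicity, so no new domination estimate is needed.

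Next I would check the two hypotheses of Lemma~\ref{volume growth of dynamical ball lower bound general version}. The Lyapunov exponents of $\mu$ along $E$ are all negative: along $E^{s}$ by uniform contraction, and along each one-dimensional bundle $E^{j}$ with $1\le j\le k-1$ by the minimality of $k$. The Lyapunov exponents of $\mu$ along $E^{cu}$ are all non-negative: along $E^{u}$ by uniform expansion, along $E^{k}$ by the defining property of $k$, and along $E^{j}$ with $k<j\le l$ by a domination argument. Indeed, for $1\le j\le l-1$ the splitting $(E^{s}\oplus\cdots\oplus E^{j})\oplus_{\prec}(E^{j+1}\oplus\cdots\oplus E^{u})$ is dominated, so for nonzero $v_{E}\in E^{j}(x)$ and $v_{F}\in E^{j+1}(x)$ one has $\frac{\|Df^{n}_{x}v_{E}\|}{\|v_{E}\|}\le C\lambda^{n}\frac{\|Df^{n}_{x}v_{F}\|}{\|v_{F}\|}$; taking $\frac{1}{n}\log$ and letting $n\to\infty$, Birkhoff's theorem (both bundles being one-dimensional) gives $\chi_{\mu}(E^{j})\le\log\lambda+\chi_{\mu}(E^{j+1})<\chi_{\mu}(E^{j+1})$. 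Hence the center exponents are strictly increasing in the index, and since $\chi_{\mu}(E^{k})\ge 0$ all center exponents with index $\ge k$ are non-negative.

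With both hypotheses verified, Lemma~\ref{volume growth of dynamical ball lower bound general version} applied to $TM=E\oplus_{\prec}E^{cu}$ yields, for the given $\varepsilon,\delta>0$, a set $\Omega$ with $\mu(\Omega)>\frac{1}{2}$ and an integer $N_{0}$ such that $\int_{B(x,n,\delta)}|\det Df^{n}_{z}|_{E^{cu}}|\,dz\ge e^{-n\varepsilon}$ for every $x\in\Omega$ and $n\ge N_{0}$; since $E^{cu}=F^{k}$ by construction, this is the assertion of the Corollary. I do not expect a genuine obstacle here: the statement is essentially bookkeeping on top of Lemma~\ref{volume growth of dynamical ball lower bound general version}, and the only point requiring a short argument is the non-negativity of the exponents on the intermediate center bundles $E^{k+1},\dots,E^{l}$, which is handled by the \emph{domination forces monotonicity of Lyapunov exponents} observation above; the degenerate cases $k=1$ and $k=u$ are absorbed by the conventions fixed at the outset.
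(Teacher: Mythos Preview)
Your proposal is correct and matches the paper's approach: the paper states only that the Corollary is ``a direct application of Lemma~\ref{volume growth of dynamical ball lower bound general version} on our situation'' and gives no further details, and you have simply filled in exactly the bookkeeping the paper leaves implicit. Your monotonicity argument for the center exponents is the natural way to check the non-negativity hypothesis on $E^{cu}$ and is the only point that requires any thought; everything else is immediate from the definitions.
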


Now we prove Lemma \ref{metric entropy bounded by liminf}. We first recall a result of Katok (Theorem 1.1 in \cite{Kat80}).

Given $\lambda>0$, let $S_{\lambda}(n,\tau)$ be the minimum number of dynamical balls $\{B(x,n,\tau)\}$ whose union has $\mu$-measure larger than or equal to $\lambda$. Recall that a subset $S\subset M$ is called a \emph{$(n,\tau)$ spanning set} if $\{B(x,n,\tau)\}_{x\in S}$ covers $M$. Moreover, the spanning set $S$ is called \emph{minimal} if its cardinality is smaller than or equal to the cardinality of any other spanning set. Given a subset $\Omega$, let $S(n,\tau,\Omega)$ denote a minimal $(n,\tau)$ spanning set of $\Omega$. 
\begin{Lemma}[Katok, \cite{Kat80}]\label{Katok}
	Let $f$ be a homeomorphism on a compact metric space $X$ and let $\mu$ be an ergodic measure.  Then for any $\lambda\in(0,1)$, $$h(f,\mu)=\lim_{\tau\to 0}\liminf_{n\to +\infty}\frac{1}{n}\log\# S_{\lambda}(n,\tau)=\lim_{\tau\to 0}\limsup_{n\to +\infty}\frac{1}{n}\log\# S_{\lambda}(n,\tau).$$
\end{Lemma}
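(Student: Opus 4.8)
The plan is to establish Katok's formula through two matching inequalities, after first recording a trivial monotonicity: if $\tau_1<\tau_2$ then a cover by $(n,\tau_1)$-balls is also a cover by $(n,\tau_2)$-balls, so $\#S_\lambda(n,\tau)$ is non-increasing in $\tau$; hence both $\liminf_{n}\tfrac1n\log\#S_\lambda(n,\tau)$ and $\limsup_{n}\tfrac1n\log\#S_\lambda(n,\tau)$ are non-increasing in $\tau$, the limits $\lim_{\tau\to0}$ exist (equal to the suprema over $\tau>0$), and it suffices to prove
\[
\lim_{\tau\to0}\limsup_{n\to\infty}\tfrac1n\log\#S_\lambda(n,\tau)\ \le\ h(f,\mu)\ \le\ \lim_{\tau\to0}\liminf_{n\to\infty}\tfrac1n\log\#S_\lambda(n,\tau).
\]
The main inputs are the Shannon--McMillan--Breiman theorem (for ergodic $\mu$ and any finite partition $\mathcal P$, $-\tfrac1n\log\mu(\mathcal P_n(x))\to h(f,\mu,\mathcal P)$ for $\mu$-a.e.\ $x$, where $\mathcal P_n=\bigvee_{i=0}^{n-1}f^{-i}\mathcal P$ and $\mathcal P_n(x)$ is the atom of $\mathcal P_n$ containing $x$), Egorov's theorem, the Birkhoff ergodic theorem, and the identity $h(f,\mu)=\sup_{\mathcal P}h(f,\mu,\mathcal P)$. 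I will also use that on a compact metric space, for every $\delta>0$ there is a finite Borel partition $\mathcal P$ with $\diam(\mathcal P)<\delta$ and $\mu(\partial\mathcal P)=0$: cover $X$ by finitely many balls $B(x_i,r_i)$ with $r_i<\delta/2$ and $\mu(\partial B(x_i,r_i))=0$ (the set of radii with $\mu(\partial B(x_i,r))>0$ is countable), and disjointify.

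For the upper bound, fix $\delta>0$, a partition $\mathcal P$ with $\diam(\mathcal P)<\delta$, some $\epsilon\in(0,1-\lambda)$, and write $h=h(f,\mu,\mathcal P)$. By SMB and Egorov there are a set $G$ with $\mu(G)>1-\epsilon$ and an $N_0$ such that $\mu(\mathcal P_n(x))\ge e^{-n(h+\epsilon)}$ for all $x\in G$ and $n\ge N_0$; since the atoms of $\mathcal P_n$ meeting $G$ are pairwise disjoint of measure $\ge e^{-n(h+\epsilon)}$, at most $e^{n(h+\epsilon)}$ of them cover $G$. The key geometric point is that each atom $\mathcal P_n(x)$ has $d_n$-diameter $<\delta$: if $y\in\mathcal P_n(x)$ then $f^iy$ and $f^ix$ lie in a common $\mathcal P$-atom, of diameter $<\delta$, for every $0\le i<n$; hence $\mathcal P_n(x)\subset B(x,n,\delta)$. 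Choosing one centre in each atom meeting $G$ yields a cover of $G$ by at most $e^{n(h+\epsilon)}$ dynamical $(n,\delta)$-balls, and $\mu(G)>1-\epsilon>\lambda$. Thus $\#S_\lambda(n,\delta)\le e^{n(h+\epsilon)}$ for $n\ge N_0$, so $\limsup_n\tfrac1n\log\#S_\lambda(n,\delta)\le h(f,\mu,\mathcal P)+\epsilon\le h(f,\mu)+\epsilon$; letting $\epsilon\to0$ and then $\delta\to0$ gives the left-hand inequality.

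For the lower bound, fix a finite partition $\mathcal P$ with $k$ atoms, small diameter, and $\mu(\partial\mathcal P)=0$, and write $h=h(f,\mu,\mathcal P)$. For $\eta>0$ let $\partial_\eta\mathcal P=\{x:B(x,\eta)\text{ meets at least two atoms of }\mathcal P\}$; since $\mu(\partial\mathcal P)=0$, the number $c(\eta):=\mu(\partial_\eta\mathcal P)$ decreases to $0$ as $\eta\to0$. Fix $\epsilon>0$ and $\tau>0$ and set $\eta=2\tau$. By SMB (so $\mu(\mathcal P_n(x))\le e^{-n(h-\epsilon)}$), by Birkhoff applied to $\mathbf 1_{\partial_\eta\mathcal P}$ (so $\#\{0\le i<n:f^ix\in\partial_\eta\mathcal P\}\le 2nc(\eta)$ for $n$ large), and by Egorov, there are sets $G_n$ with $\mu(G_n)\to1$ on which both bounds hold. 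Now take any cover $\{B(x_j,n,\tau)\}_{j\le N}$, $N=\#S_\lambda(n,\tau)$, of a set $A$ with $\mu(A)\ge\lambda$; for $n$ large $\mu(G_n)>1-\tfrac\lambda2$, so $\mu(A\cap G_n)\ge\tfrac\lambda2$. For each $j$ with $B(x_j,n,\tau)\cap(A\cap G_n)\neq\emptyset$ pick $x_j'$ in that set; then $B(x_j,n,\tau)\subset B(x_j',n,2\tau)$, so the at most $N$ balls $B(x_j',n,2\tau)$, with centres in $G_n$, still cover $A\cap G_n$. Each $B(x_j',n,2\tau)$ meets at most $k^{\#\{i<n:f^ix_j'\in\partial_{2\tau}\mathcal P\}}\le k^{2nc(\eta)}$ atoms of $\mathcal P_n$, while any atom meeting $G_n$ has measure $\le e^{-n(h-\epsilon)}$; hence $\tfrac\lambda2\le\mu(A\cap G_n)\le N\,k^{2nc(\eta)}e^{-n(h-\epsilon)}$, i.e.
\[
\tfrac1n\log\#S_\lambda(n,\tau)\ \ge\ h-\epsilon-2c(2\tau)\log k+\tfrac1n\log\tfrac\lambda2 .
\]
Taking $\liminf_{n}$, then $\tau\to0$ (so $c(2\tau)\to0$), then $\epsilon\to0$, and finally the supremum over all finite partitions $\mathcal P$ gives $\lim_{\tau\to0}\liminf_n\tfrac1n\log\#S_\lambda(n,\tau)\ge h(f,\mu)$, completing the proof.

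I expect the only real obstacle to lie in the bookkeeping for the lower bound: in an abstract compact metric space a dynamical $(n,\tau)$-ball need not be contained in a single atom of $\mathcal P_n$, so one must control how many atoms it can intersect. This is precisely why the partition is chosen with $\mu$-negligible boundary and why the "bad times" $\{i:f^ix\in\partial_{2\tau}\mathcal P\}$ are counted via Birkhoff's theorem; the resulting error term $2c(2\tau)\log k$ is harmless because it vanishes as $\tau\to0$. The recentring step (moving each ball centre into $A\cap G_n$) is the device that lets one invoke the good-set estimates for the originally arbitrary cover, and everything else is a routine assembly of SMB, Birkhoff, Egorov, and the definition of metric entropy.
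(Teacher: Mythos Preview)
The paper does not prove this lemma at all: it is stated with attribution to Katok \cite{Kat80} (as Theorem~1.1 there) and invoked as a black box in the proof of Lemma~\ref{metric entropy bounded by liminf}. There is therefore no in-paper proof to compare against.

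Your argument is a correct and essentially standard self-contained proof of Katok's formula. The upper bound via Shannon--McMillan--Breiman, Egorov, and the inclusion $\mathcal P_n(x)\subset B(x,n,\delta)$ when $\diam\mathcal P<\delta$ is exactly the usual route. For the lower bound, your device of choosing $\mathcal P$ with $\mu(\partial\mathcal P)=0$, controlling the number of boundary visits $\#\{i<n:f^ix\in\partial_{2\tau}\mathcal P\}$ via Birkhoff, and recentring the covering balls into the good set $G_n$ so that the atom-count bound $k^{2nc(2\tau)}$ applies, is again the standard mechanism (and is precisely what is needed in a bare compact metric space, where one cannot simply assume a Bowen ball sits inside a single $\mathcal P_n$-atom). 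The monotonicity remark at the start is correct and justifies the existence of the limits in $\tau$. In short: your proof is fine, and supplies what the paper deliberately omits by citation.
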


\begin{proof}[Proof of Lemma \ref{metric entropy bounded by liminf}.]
Given $\varepsilon,\delta>0$ and an ergodic measure $\mu$, let $\Omega$ be the set with $\mu(\Omega)>\frac{1}{2}$ in Corollary \ref{volume growth of dynamical ball lower bound multi centers} and let $N_{0}$ be the corresponding integer. Let $S(n,2\delta,\Omega)$ be a minimal $(n,2\delta)$ spanning set of $\Omega$. By definition, for any $y_{1}, y_{2}\in S(n,2\delta,\Omega)$, $$B(y_{1}, n,\delta)\cap B(y_{2}, n,\delta)=\emptyset.$$

For any $n\geq N_{0}$, by Corollary \ref{volume growth of dynamical ball lower bound multi centers},

$$\begin{aligned}
\int_{M}|\det Df_{x}^{n}|_{F^{k}}|\,d x
&\geq \# S(n,2\delta,\Omega)\cdot\min_{x\in S(n,2\delta,\Omega)}\int_{B(x,n,\delta)} |\det Df_{y}^{n}|_{F^{k}}|\,dy\\
&\geq \# S(n,2\delta,\Omega)\cdot e^{-n\varepsilon}.
\end{aligned}$$
Then $$\liminf_{n\to\infty}\frac{1}{n}\log\int_{M}|\det Df_{x}^{n}|_{F^{k}}|\,d x+\varepsilon\geq \liminf_{n\to\infty}\frac{1}{n}\log S(n,2\delta
,\Omega).$$		

Since $\mu(\Omega)>\frac{1}{2}$, we have $S(n,2\delta,
\Omega)\geq S_{\frac{1}{2}}(n,2\delta)$. By Lemma \ref{Katok} and the arbitrariness of $\varepsilon$ and $\delta$, we get the result.

\end{proof}

\subsection{Lower bound of topological entropy}
In this subsection, we bound from below the topological entropy by the volume growth rate along the dominated sub-bundles. 

\begin{Proposition}\label{lower bound of topo entropy}
	Let $f$ be a $C^{1}$ diffeomorphism on a compact manifold $M$. Assume there is a partially hyperbolic splitting $TM=E^{s}\oplus_{\prec} E^{1}\oplus_{\prec} E^{2}\cdots \oplus_{\prec}E^{l}\oplus_{\prec} E^{u}$ with $\dim E^{i}=1,\,1\leq i\leq l.$ Then $$h_{\top}(f)\geq\limsup_{n\to\infty}\frac{1}{n}\log\int\max_{0\leq i\leq l}|\det Df_{x}^{n}|_{F^{i}}|\,d x$$ where the bundle $F^{i}\triangleq E^{i+1}\oplus E^{i+2}\oplus\cdots \oplus E^{l}\oplus E^{u}$.
\end{Proposition}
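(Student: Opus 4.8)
\emph{Proof sketch (proposal).} The strategy mirrors the proof of Lemma~\ref{metric entropy bounded by liminf}, but run in the opposite direction: there one fed an \emph{at-many-points lower bound} for the volume growth inside dynamical balls into a spanning-set estimate; here I would feed an \emph{at-every-point upper bound} into a spanning-set count. First I reduce to a single bundle. Since $\max_{0\le i\le l}|\det Df_x^n|_{F^i}|\le\sum_{i=0}^{l}|\det Df_x^n|_{F^i}|$ pointwise, it suffices to control, for each fixed $i\in\{0,\dots,l\}$, the integral $\int_M|\det Df_x^n|_{F^i}|\,dx$; summing the $l+1$ resulting estimates only costs a factor $l+1$, which disappears after $\tfrac1n\log(\cdot)$ and $n\to\infty$.

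For a fixed $i$, recall that $TM=(E^{s}\oplus E^{1}\oplus\cdots\oplus E^{i})\oplus_{\prec}F^{i}$ is a dominated splitting whose stronger bundle is $F^{i}$. The key input is the uniform upper estimate for the volume growth of dynamical balls along the stronger bundle (this is the lemma referred to as Lemma~\ref{volume growth of dynamical ball} in the introduction; it is the companion of Lemma~\ref{volume growth of dynamical ball lower bound general version}, but — and this is the whole point — it must hold at \emph{every} $x\in M$, not merely on a subset of large $\mu$-measure consisting of Lyapunov-regular points): for every $\varepsilon>0$ there are $\delta_{0}>0$ and $N_{0}\in\mathbb N$ such that $\int_{B(x,n,\delta)}|\det Df^{n}_{z}|_{F^{i}}|\,dz\le e^{n\varepsilon}$ for all $x\in M$, all $0<\delta\le\delta_{0}$, and all $n\ge N_{0}$.

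Granting this, fix $\varepsilon>0$ and $0<\delta\le\delta_{0}$, and let $S=S(n,\delta,M)$ be a minimal $(n,\delta)$-spanning set of $M$, so that $\{B(x,n,\delta)\}_{x\in S}$ covers $M$. Then for $n\ge N_{0}$,
$$\int_{M}\max_{0\le i\le l}|\det Df^{n}_{z}|_{F^{i}}|\,dz\le\sum_{i=0}^{l}\sum_{x\in S}\int_{B(x,n,\delta)}|\det Df^{n}_{z}|_{F^{i}}|\,dz\le(l+1)\,\#S(n,\delta,M)\,e^{n\varepsilon}.$$
Applying $\tfrac1n\log(\cdot)$ and $\limsup_{n\to\infty}$, and using that $\limsup_{n\to\infty}\tfrac1n\log\#S(n,\delta,M)\le h_{\top}(f)$ for every $\delta>0$ (the quantity $\limsup_{n}\tfrac1n\log\#S(n,\delta,M)$ is non-decreasing as $\delta\downarrow0$ and its limit is the topological entropy; see e.g.\ \cite{Wal82}), we obtain
$$\limsup_{n\to\infty}\frac1n\log\int_{M}\max_{0\le i\le l}|\det Df^{n}_{z}|_{F^{i}}|\,dz\le h_{\top}(f)+\varepsilon.$$
Letting $\varepsilon\to0$ yields the proposition.

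The main obstacle is, of course, the uniform estimate itself; everything else is bookkeeping. On Lyapunov-regular points one argues essentially as in Lemma~\ref{volume growth of dynamical ball lower bound general version}: take a small disk through $x$ tangent to the $F^{i}$-cone, iterate it, use domination both to keep it tangent to the cone and to compare $|\det Df^{n}|_{F^{i}}|$ with the honest Jacobian along the disk up to a factor $e^{\pm n\varepsilon}$, confine the $n$-th image to a ball of radius $O(\delta)$ around $f^{n}(x)$ (where a graph transverse to a narrow cone has volume $O(\delta^{\dim F^{i}})$), and reassemble via Fubini along a complementary center-stable holonomy. The difficulty is to make all constants independent of $x$ even at irregular points, where one loses the density of "good return times" that ergodicity supplies; the estimate must then be carried along the entire forward orbit with no recurrence input, and this is exactly the place where the one-dimensionality of the centers $E^{i}$ is used, as it keeps the distortion accounting along $F^{i}$ tractable in the merely $C^{1}$ category.
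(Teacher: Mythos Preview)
Your proposal is correct and follows essentially the same route as the paper: cover $M$ by Bowen balls from a minimal $(n,\delta)$-spanning set, apply the uniform upper bound of Lemma~\ref{volume growth of dynamical ball} on each ball, sum, and take $\limsup\frac{1}{n}\log$. The only cosmetic difference is that the paper first upgrades Lemma~\ref{volume growth of dynamical ball} to Corollary~\ref{volume growth of dynamical ball multi centers} (pulling the $\max_i$ inside the dynamical-ball integral by using that the Jacobian is $e^{\pm n\varepsilon}$-constant on a Bowen ball), whereas you use the cruder but equally effective $\max_i\le\sum_i$; both reductions lead to the same spanning-set inequality.

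One correction to your last paragraph: the uniform estimate (Lemma~\ref{volume growth of dynamical ball}) does \emph{not} use the one-dimensionality of the centers, and its proof is simpler than you suggest. It holds for an arbitrary dominated splitting $TM=E\oplus_\prec F$, and the argument is purely geometric with no recurrence and no regular points: a small $F$-cone disk through $x$ inside $B(x,n,\delta)$ stays tangent to the cone under iteration (domination), its $n$-th image is contained in a $\delta$-scale neighborhood of $f^{n}(x)$ and is therefore of uniformly bounded volume, and the comparison between $|\det Df^{n}|_{F}|$ and the honest Jacobian along the disk is just the uniform continuity of $Df$ on a Bowen ball; Fubini over the linear $E$-translates in $T_xM$ finishes. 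The one-dimensionality of the $E^{i}$ enters the paper only in the \emph{upper} bound for $h_{\top}$ (Lemma~\ref{metric entropy bounded by liminf}), where one needs a dominated sub-bundle that exactly captures the non-negative Lyapunov spectrum of a given $\mu$; see also the Remark after Proposition~\ref{lower bound of topo entropy}.
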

Combining Proposition \ref{lower bound of topo entropy} above with Proposition \ref{application to multi 1D centers} in the next section, we will get the desired lower bound of the topological entropy.

The key tool to prove Proposition \ref{lower bound of topo entropy} is to estimate from above the volume growth of the dynamical balls which is stated in the following. We remark that in Corollary \ref{volume growth of dynamical ball lower bound multi centers}, we estimate from below the volume growth of the dynamical balls for the partially hyperbolic systems with multi 1-D center. To estimate from above, we do not have to know on which sub-bundle the negative and non-negative Lyapunov exponents are separated. This brings two advantages: a general domination (not have to be partially hyperbolic) is enough and a probabilistic argument can be avoided so that we can consider the dynamical behavior of all points.
	
\begin{Lemma} \label{volume growth of dynamical ball}
	Let $f$ be a $C^1$ diffeomorphism on a compact manifold $M$. Assume there is a dominated splitting $TM=E\oplus_{\prec}F$. Then there is a constant $C$ such that for any $\varepsilon>0$, there is a constant $\delta>0$ such that for any $x\in M$ and $n\in\mathbb{N}$, 
	\begin{equation}\label{upper bound of volume growth on dynamical balls for E oplus F}
	\int_{B(x,n,\delta)} |\det Df_{y}^{n}|_{F}|\,dy\leq C e^{n\varepsilon}.		\end{equation}
\end{Lemma}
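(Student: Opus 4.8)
The plan is to cover the dynamical ball $B(x,n,\delta)$ by a controlled number of pieces of submanifolds tangent to a cone around $F$, and on each such piece estimate the volume growth directly using the domination $E \oplus_\prec F$. First I would fix, as in the proof of Lemma \ref{volume growth of dynamical ball lower bound general version}, a cone width $\alpha>0$ and an integer $T$ such that images under $f^k$ ($k\ge T$) of submanifolds tangent to $\mathcal{C}^\alpha_F$ stay tangent to $\mathcal{C}^\alpha_F$; then choose $\delta_0$ so small that all the exponential-chart and bounded-distortion estimates used below hold uniformly in $x$, and so that $\delta \le \delta_0$ will be forced at the end depending on $\varepsilon$. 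The essential geometric input is that for $y \in B(x,n,\delta)$ close to $x$, the submanifold $\exp_x(E_x(\delta) \oplus F_x(\delta))$ sliced along $F$ through $y$ behaves, under $Df^j$, like $Df^j|_{F}$ up to a factor $e^{\pm j\varepsilon'}$: domination forces vectors in a cone around $F$ to be expanded at least as much as any vector in $E$, so $|\det Df^j_y|_V|$ for $V$ a $\dim F$-plane in $\mathcal{C}^\alpha_F$ is comparable (within $e^{j\varepsilon'}$) to $|\det Df^j_y|_{F(y)}|$; this is the analogue of inequality (\ref{expansion error}) but used in the reverse direction, and it is why I only need a dominated splitting, not partial hyperbolicity.

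The counting step is the core. For each $j = 0,1,\dots,n$ I would look at how many $\delta$-balls (in the ambient manifold, or rather in the appropriate chart) are needed so that the image $f^j$ of a single $F$-disk of radius $\delta$ through $x$ gets chopped into pieces each of diameter at most $\delta$: since $f$ is Lipschitz with constant $C_0 = \max \|Df\|$, a disk of $F$-dimension through $x$ has $f^n$-image of volume at most $\int_{disk} |\det Df^n|_F| \le (\text{radius})^{\dim F} \cdot \sup |\det Df^n|_F|$, but more usefully its image can be covered by at most $\prod_{j} (\text{local expansion at step } j)$-many $\delta$-pieces, and the total number of $(n,\delta)$-pieces needed to cover $B(x,n,\delta)\cap \exp_x(E_x(\delta)\oplus F_x(\delta))$ transversally in the $F$-direction is then controlled by $\int_{B(x,n,\delta)\cap (F\text{-slice})} |\det Df^n|_F|$ divided by the minimal volume of a surviving piece. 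Combining: partition $B(x,n,\delta)$ using the $E$-direction and the $F$-slices (a Fubini argument over the local product structure as in the previous proof), bound the $E$-direction contribution by $1$ (contraction or at worst $C_0^{n\cdot\dim E}$, which one absorbs by first passing to an iterate $f^N$ with $N$ large so that domination gives genuine comparison — alternatively, the standard trick of covering and using that on $B(x,n,\delta)$ the $E$-part cannot expand much relative to $F$), and conclude $\int_{B(x,n,\delta)} |\det Df^n_y|_F| \le C e^{n\varepsilon}$ for $\delta$ chosen small depending on $\varepsilon$.

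The main obstacle I anticipate is making the distortion/comparison uniform over \emph{all} $x \in M$ and \emph{all} $n$ simultaneously, with a single constant $C$ and a single $\delta = \delta(\varepsilon)$ — in particular handling the transition times where the orbit of $x$ is in a region of weak domination, which in the $C^1$ setting one cannot improve by Pesin blocks. The clean way around this is to replace $f$ by a fixed power $f^N$ (chosen so that $C \lambda^N < 1$ in the domination inequality, giving honest exponential gain) and to note that the cone-invariance and the estimate $\frac{m(Df^{kN}|_{T_y\gamma})}{m(Df^{kN}_z|_F)} \ge e^{-k N \varepsilon'}$ hold uniformly once $\delta \le \delta_0(\varepsilon')$; the passage from $f^N$ back to $f$ only changes the constant $C$ and the value of $\varepsilon$ by a bounded factor. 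A secondary technical point is that $B(x,n,\delta)$ need not be contained in one exponential chart, so the covering argument must be phrased along the orbit (dynamical charts at each $f^j(x)$), exactly as $\mathcal{W}(f^n(x),\tau)$ was used before; but that is routine given the earlier setup.
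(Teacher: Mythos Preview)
Your framework (Fubini over the local product $E_x\times F_x$, cone invariance, distortion control) matches the paper's, but you are missing the one observation that makes the proof short, and as a result the ``counting step'' you describe is both unnecessary and too vague to work as written.

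The key point you do not use is this: if $\gamma_x(s)=\exp_x\big((s+F_x)\cap \widetilde B(x,n,\delta)\big)$ is the $F$-slice through $s\in E_x$, then by definition of the dynamical ball $f^j(\gamma_x(s))\subset B(f^j(x),\delta)$ for every $0\le j\le n$. In particular $f^n(\gamma_x(0))$ is a submanifold tangent to $\mathcal C^\tau_F$ of diameter at most $2\delta$, so its induced volume is bounded by a constant $\hat C$ depending only on $\delta_0$ and the geometry of $M$. But that volume is exactly
\[
\int_{\gamma_x(0)} \big|\det Df^n_y|_{T_y\gamma_x(0)}\big|\,dy \;=\; \mathrm{Leb}\big(f^n(\gamma_x(0))\big)\;\le\;\hat C,
\]
uniformly in $x$ and $n$. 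No covering or chopping into pieces is needed: the dynamical ball already forces the image to be small. A single distortion estimate (your analogue of (\ref{expansion error})) then replaces $|\det Df^n|_{T\gamma}|$ by $|\det Df^n|_F|$ at the cost of $e^{n\varepsilon}$, and the Fubini integral over $s\in \widetilde B^E(x,n,\delta)$ contributes only the \emph{volume} of a ball of radius $\delta$ in $E_x$, which is a fixed constant independent of $n$; you do not need $E$ to contract, and you do not need to pass to an iterate $f^N$.

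This also dissolves your anticipated obstacle about uniformity in $x$ and $n$: the bound $\mathrm{Leb}(f^n(\gamma_x(0)))\le \hat C$ is uniform precisely because membership in $B(x,n,\delta)$ is a pointwise geometric constraint, not a Pesin-block condition. Your plan to ``count pieces of diameter $\le\delta$'' would give a bound like $\hat C$ times that count, which is exactly backwards; the count is $1$ for the slice inside the dynamical ball.
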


\begin{proof}

	
	Indeed, to prove the lemma, it is sufficient to find some $\delta_{0}>0$ such that Equation (\ref{upper bound of volume growth on dynamical balls for E oplus F}) above holds for all $\delta\leq\delta_{0}$ with some constant $C$ only depending  on $\delta_{0}$.
	
	For $\delta>0$, write $$\widetilde{B}(x,n,\delta)\triangleq \exp_{x}^{-1}(B(x,n,\delta))\subset T_{x}M.$$
	
	First note that there is $\delta_{0}\ll 1$ small enough such that for any $\delta\leq\delta_{0}$, any $x\in M$ and any $n\in\mathbb{N}$, \begin{equation}\label{exp uniform control}
	\int_{B(x,n,\delta)} |\det Df_{y}^{n}|_{F}|\,dy\leq 2\int_{\widetilde{B}(x,n,\delta)} |\det Df^{n}|_{F}|\circ\exp_{x}(w)\,d w.
	\end{equation}

	Applying Fubini Theorem to the foliation $\mathcal{F}_{x}\triangleq\{s+F_{x}\,|\,s\in E_{x}\},$ we get that 
	\begin{equation}\label{fubini theorem}
	\int_{\widetilde{B}(x,n,\delta)} |\det Df^{n}|_{F}|\circ\exp_{x}(w)\,dw=\int_{\widetilde{B}^{E}(x,n,\delta)}\big(\int_{\widetilde{B}^{F}_{s}(x,n,\delta)} |\det Df^{n}|_{F}|\circ\exp_{x}((t,s))\,dt\big)ds
	\end{equation}
	where $\widetilde{B}^{E}(x,n,\delta)=\widetilde{B}(x,n,\delta)\cap E_{x}$ and $\widetilde{B}^{F}_{s}(x,n,\delta)=\widetilde{B}(x,n,\delta)\cap (s+F_{x})$.
	
	Write for simplicity $$\gamma_{x}(s)\triangleq\exp_{x}(\widetilde{B}^{F}_{s}(x,n,\delta)).$$
	
	We fix a cone $\mathcal{C}^{\tau}_{F}$ for some number $\tau>0$ such that for each $\delta\leq\delta_{0}$, all the corresponding $\{\gamma_{x}(s)\}$ are tangent to $\mathcal{C}^{\tau}_{F}$. We also fix a constant $\hat{C}$ such that for any embedded sub-manifold $\gamma$ tangent to $\mathcal{C}^{\tau}_{F}$ with radius less than $\delta_{0}$, the induced Lebesgue volume is bounded above:	$$Leb(\gamma)\leq \hat{C}.$$ We note that the domination gives that there is some uniform integer $N\in\mathbb{N}$ such for any embedded sub-manifold $\gamma$ tangent to $\mathcal{C}^{\tau}_{F}$ and any $n\geq N$, $f^{n}(\gamma)$ is also tangent to $\mathcal{C}^{\tau}_{F}$.

	As a consequence, for any $\delta\leq\delta_{0}$ and any $x\in M$, 
	\begin{equation}\label{volume of disk tangent to the cone of F}
	Leb(f^{n}(\gamma_{x}(0)))\leq \hat{C},\quad n\geq N.
	\end{equation}

%
%

	Moreover, given any $\varepsilon>0$, by shrinking $\delta_{0}$ if necessary, we can assume that for any $\delta\leq\delta_{0}$, any $x\in M$, $n\in\mathbb{N}$, any $s\in\widetilde{B}^{E}(x,n,\delta)$ and $y\in \gamma_{x}(0), z\in \gamma_{x}(s)$, 
	\begin{equation}\label{small perturbation}
	\frac{|\det Df^{n}_{z}|_{F}|}{|\det Df^{n}_{y}|_{T_{y}}\gamma_{x}(0)|}\leq e^{n\varepsilon}.
	\end{equation}
	Also note that we can identity $\gamma_{x}(0)$ with any $\gamma_{x}(s)$ through a smooth diffeomorphism $h_{s}:\gamma_{x}(0)\to \gamma_{x}(s)$ with $ |\det(Dh_{s})|\leq 2$. Combining the formulas (\ref{volume of disk tangent to the cone of F}) and (\ref{small perturbation}), we have for $n\geq N$ and $s\in\widetilde{B}^{E}(x,n,\delta)$, $$\int_{\widetilde{B}_{s}^{F}(x,n,\delta)} |\det Df^{n}|_{F}|\circ\exp_{x}(t,s)\,dt\leq 2e^{n\varepsilon}\int_{\gamma_{x}(0)} |\det Df^{n}_{y}|_{T_{y}\gamma_{x}(0)}| dy= e^{n\varepsilon}Leb(f^{n}(\gamma_{x}(0)))\leq \hat{C} e^{n\varepsilon}.$$
	As a consequence, by letting $C_{0}=4\hat{C}$, (\ref{exp uniform control}) and (\ref{fubini theorem}) immediately give that $$\int_{B(x,n,\delta)} |\det Df_{y}^{n}|_{F}|\,dy\leq C_{0} e^{n\varepsilon},\quad n\geq N.$$	 Replacing $C_{0}$ by a larger number $C$ if necessary, we can assume that the above inequality holds w.r.t. $C$ for all $n\in \mathbb{N}$ and $\delta\leq\delta_{0}$.
\end{proof}

For partially hyperbolic systems with multi one dimensional centers, we have the following application of Lemma \ref{volume growth of dynamical ball}.
\begin{Corollary}\label{volume growth of dynamical ball multi centers}
	Let $f$ be a $C^{1}$ diffeomorphism on a compact manifold $M$. Assume there is a partially hyperbolic splitting $TM=E^{s}\oplus_{\prec} E^{1}\oplus_{\prec} E^{2}\cdots \oplus_{\prec}E^{l}\oplus_{\prec} E^{u}$ with $\dim E^{i}=1,\,1\leq i\leq l.$ Then there is a constant $C$ such that for any $\varepsilon>0$, there is $\delta>0$ such that for any $x\in M$ and any $n\in
	\mathbb{N}$, $$\int_{B(x,n,\delta)}\max_{0\leq i\leq l} |\det Df^{n}_{y}|_{F^{i}}| dy\leq C e^{n\varepsilon}$$ where the bundle $F^{i}=E^{i+1}\oplus E^{i+2}\oplus\cdots \oplus E^{l}\oplus E^{u}$.
\end{Corollary}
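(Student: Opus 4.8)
The plan is to deduce this from Lemma \ref{volume growth of dynamical ball} by applying it to each of the finitely many dominated splittings associated with the flag, and then combining the bounds.

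First I would fix, for each index $0\le i\le l$, the dominated splitting $TM=(E^{s}\oplus E^{1}\oplus\cdots\oplus E^{i})\oplus_{\prec}F^{i}$, which is available directly from the definition of a partially hyperbolic splitting (with the convention that for $i=0$ the left summand is $E^{s}$, and the case where $E^{s}$ is trivial, so $F^{0}=TM$, is treated separately at the end). Applying Lemma \ref{volume growth of dynamical ball} with $E=E^{s}\oplus\cdots\oplus E^{i}$ and $F=F^{i}$, we obtain a constant $C_{i}$ (independent of $\varepsilon$) so that for every $\varepsilon>0$ there is $\delta_{i}>0$ with
$$\int_{B(x,n,\delta_{i})}|\det Df^{n}_{y}|_{F^{i}}|\,dy\le C_{i}e^{n\varepsilon},\qquad \forall\,x\in M,\ n\in\mathbb{N}.$$

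Next, given $\varepsilon>0$, I would set $\delta=\min_{0\le i\le l}\delta_{i}>0$. Since $\delta\le\delta_{i}$ forces $B(x,n,\delta)\subset B(x,n,\delta_{i})$ and the integrands are nonnegative, the displayed bound remains valid with $\delta$ in place of $\delta_{i}$ for every $i$. Then, estimating the maximum by the sum over the $l+1$ indices,
$$\int_{B(x,n,\delta)}\max_{0\le i\le l}|\det Df^{n}_{y}|_{F^{i}}|\,dy\le\sum_{i=0}^{l}\int_{B(x,n,\delta)}|\det Df^{n}_{y}|_{F^{i}}|\,dy\le\Big(\sum_{i=0}^{l}C_{i}\Big)e^{n\varepsilon},$$
so the corollary holds with $C=\sum_{i=0}^{l}C_{i}$. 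The only edge cases are the degenerate ones: if $E^{s}$ is trivial then $F^{0}=TM$ and $\int_{B(x,n,\delta)}|\det Df^{n}_{y}|_{TM}|\,dy={\rm Vol}(f^{n}(B(x,n,\delta)))\le{\rm Vol}(M)$, and if $E^{u}$ is trivial then $F^{l}=\{0\}$ and the corresponding integrand is identically $1$, bounded by ${\rm Vol}(M)$; both are absorbed into $C$.

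There is no serious obstacle here: the substantive work is entirely contained in Lemma \ref{volume growth of dynamical ball}. The only points requiring care are that the constant $C$ be independent of $\varepsilon$ (which is guaranteed by the corresponding uniformity in Lemma \ref{volume growth of dynamical ball}, since each $C_{i}$ depends only on the fixed size $\delta_{0,i}$ and not on $\varepsilon$), the bookkeeping of taking $\delta$ to be the minimum of the finitely many $\delta_{i}$, and the trivial handling of the degenerate extremal bundles.
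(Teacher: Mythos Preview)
Your proposal is correct. Both arguments reduce the corollary to Lemma \ref{volume growth of dynamical ball} applied once to each of the $l+1$ dominated splittings, but the way the pointwise maximum is handled differs. The paper shrinks $\delta$ further so that $|\det Df^{n}_{y}|_{F^{i}}|$ and $|\det Df^{n}_{x}|_{F^{i}}|$ agree up to a factor $e^{n\varepsilon/3}$ on $B(x,n,\delta)$; it then freezes the evaluation at the center $x$, pulls the (now constant-in-$y$) max outside the integral, and unfreezes, arriving at $\int\max\le e^{2n\varepsilon/3}\max\int$. You instead bound $\max_{i}$ by $\sum_{i}$ and add the $l+1$ estimates directly. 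Your route is shorter and avoids the extra distortion step entirely; the only price is that your constant is $\sum_{i}C_{i}$ rather than the single $C$, which is irrelevant for the application. Your separate treatment of the degenerate cases $E^{s}=\{0\}$ and $E^{u}=\{0\}$ (where Lemma \ref{volume growth of dynamical ball} is not literally applicable because one summand of the splitting is trivial) is also correct and something the paper glosses over.
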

\begin{proof}
	By applying Lemma \ref{volume growth of dynamical ball} for $l$ times, we can get a constant $C$ such that for any $\varepsilon>0$, there is $\delta>0$ such that for any $x\in M$ and $n\in\mathbb{N}$, $$\max_{0\leq i\leq l}\int_{B(x,n,\delta)} |\det Df^{n}_{y}|_{F^{i}}| dy\leq C e^{\frac{n\varepsilon}{3}}.$$ On the other hand, by shrinking $\delta$, we can assume for any $p,q\in M$, any $0\leq i\leq l$ and any $n\in\mathbb{N}$, if $p\in B(q,n,\delta)$, then $$\frac{|\det Df^{n}_{q}|_{F^{i}}|}{|\det Df^{n}_{p}|_{F^{i}}|}\leq e^{\frac{n\varepsilon}{3}}.$$ Hence 
	$$\begin{aligned}
	\int_{B(x,n,\delta)} \max_{0\leq i\leq l}|\det 	Df^{n}_{y}|_{F^{i}}|\,dy
	&\leq e^{\frac{n\varepsilon}{3}}\int_{B(x,n,\delta)} \max_{0\leq i\leq l}|\det Df^{n}_{x}|_{F^{i}}| dy\\
	&= e^{\frac{n\varepsilon}{3}} \max_{0\leq i\leq l}\int_{B(x,n,\delta)}|\det Df^{n}_{x}|_{F^{i}}| dy\\
	&\leq e^{\frac{2n\varepsilon}{3}} \max_{0\leq i\leq l}\int_{B(x,n,\delta)}|\det Df^{n}_{y}|_{F^{i}}| dy\\
	&\leq C e^{n\varepsilon}.
	\end{aligned}$$	
\end{proof}

Now we are ready to give the lower bound of the topological entropy.

\begin{proof}[Proof of Proposition \ref{lower bound of topo entropy}]
	For $\delta>0$ and $n\in\mathbb{N}$, let $S(n,\delta, M)$ be a minimal spanning set of $M$. 
	By Corollary \ref{volume growth of dynamical ball multi centers}, there is a constant $C$ such that for any $\varepsilon>0$, there are some $\delta>0$ and  $n\in\mathbb{N}$,
	
	$$\begin{aligned}
	\int\max_{0\leq i\leq l}|\det Df_{x}^{n}|_{F^{i}}|\,d x
	&\leq \# S(n,\delta, M)\cdot\max_{x\in S(n,\delta, M)}\int_{B(x,n,\delta)} \max_{0\leq i\leq l}|\det Df_{y}^{n}|_{F^{i}}|\,dy\\
	&\leq \# S(n,\delta, M)\cdot C e^{n\varepsilon}.
	\end{aligned}$$
	Then $$\limsup_{n\to\infty}\frac{1}{n}\log\int\max_{0\leq i\leq l}|\det Df_{x}^{n}|_{F^{i}}|\,d x-\varepsilon\leq \limsup_{n\to\infty}\frac{1}{n}\log S(n,\delta, M
	).$$		
	
	Note that by the definition of topological entropy, for any $\delta>0$, the right side above is always less than or equal to the topological entropy. Hence by the arbitrariness of $\varepsilon$, we get the result.
\end{proof}
\begin{Remark}
	
	By a similar argument in Proposition \ref{lower bound of topo entropy}, for the case of dominated splitting $TM=E\oplus F$, Lemma \ref{volume growth of dynamical ball} gives an interesting result: $$h_{\top}(f)\geq\limsup_{n\to\infty}\frac{1}{n}\log\int|\det Df_{x}^{n}|_{F}|\,d x.$$
\end{Remark}

\section{The volume growth rate along subspaces}\label{The volume growth rate along subspaces}
Our goal in this section is to relate the volume growth rate along these subspaces ${F^{k}}$ in Proposition \ref{lower bound of topo entropy} to volume growth rate along all subspaces. 
\begin{Proposition}\label{application to multi 1D centers}
	Let $f$ be a $C^{1}$ diffeomorphism on a compact manifold $M$. Assume there is a partially hyperbolic splitting $TM=E^{s}\oplus_{\prec} E^{1}\oplus_{\prec} E^{2}\cdots \oplus_{\prec}E^{l}\oplus_{\prec} E^{u}$ with $\dim E^{i}=1,\,1\leq i\leq l.$ Then
	\begin{equation}\label{V equals to F i}
	\limsup_{n\to\infty}\frac{1}{n}\log\int\max_{V\subset T_{x}M}|\det Df_{x}^{n}|_{V}|\,d x= \limsup_{n\to\infty}\frac{1}{n}\log\int\max_{0\leq i\leq l}|\det Df_{x}^{n}|_{F^{i}}|\,d x
	\end{equation}
	where the bundle $F^{i}=E^{i+1}\oplus E^{i+2}\oplus\cdots \oplus E^{l}\oplus E^{u}$.
\end{Proposition}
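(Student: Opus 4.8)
The plan is to prove the nontrivial inequality
\[
\limsup_{n\to\infty}\frac{1}{n}\log\int\max_{V\subset T_xM}|\det Df_x^n|_V|\,dx\le \limsup_{n\to\infty}\frac{1}{n}\log\int\max_{0\le i\le l}|\det Df_x^n|_{F^i}|\,dx,
\]
the reverse inequality being immediate since each $F^i$ is one of the competing subspaces $V$. The key point is a pointwise (or almost-pointwise, up to sub-exponential error) comparison: for every $x\in M$ and every subspace $V\subset T_xM$, one has $|\det Df_x^n|_V|\le C_n\cdot\max_{0\le i\le l}|\det Df_x^n|_{F^i}|$ with $C_n$ sub-exponential. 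If such a comparison holds with a constant independent of $x$ and $V$, the proposition follows at once by integrating. So the real content is: \emph{the maximal volume growth over all subspaces is, up to sub-exponential factors, achieved on one of the bundles $F^i$}, and this must hold at \emph{every} point, not merely on a full-measure or Lyapunov-regular set.

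The main tool I would use is the domination structure. Fix $x$ and $V\subset T_xM$ with $\dim V=m$. First I would use the uniform contraction on $E^s$ and uniform expansion on $E^u$ to reduce to the case where $V$ has no component "too deep" in $E^s$ and contains $E^u$-ish directions appropriately; more precisely, domination of $E^s\oplus\cdots$ against the rest lets one project $V$ onto the bundle $F^0=E^1\oplus\cdots\oplus E^l\oplus E^u$ (or its complement) and control the determinant change by a uniform constant depending only on the angle, which is bounded below along orbits by domination. The heart of the argument is then purely about the central part: inside the flag $F^l\subset F^{l-1}\subset\cdots\subset F^0$ of bundles obtained by successively adjoining the one-dimensional $E^i$'s, I claim that for any subspace $V$ of dimension $m$, $|\det Df_x^n|_V|$ is at most (a uniform constant times) $\max_i|\det Df_x^n|_{F^i}|$ where $F^i$ ranges over the subspaces in this flag of the matching dimension — but since the $E^i$ are one-dimensional and linearly ordered by domination, the extremal $m$-dimensional subspace for volume growth is forced, up to bounded angle errors, to be the span of the $m$ "strongest" directions, which is exactly one of the $F^i$ (after also folding in the choice of whether to include $E^u$ and exclude $E^s$). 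Domination is what makes "strongest $m$ directions" well-defined up to uniformly bounded distortion along the whole orbit: the key sub-lemma is that if $TM=A\oplus_\prec B$ is dominated and $V$ is any subspace, then $|\det Df^n|_V|\le C\max(|\det Df^n|_{V\cap A}|\cdot|\det Df^n|_{(\text{proj of }V\text{ to }B)}|)$ and moreover replacing a weak direction by a strong one never decreases the determinant by more than a uniform constant. Iterating this across the $l+1$ dominated splittings collapses any $V$ onto one of the $F^i$'s with a uniform multiplicative error $C$ (independent of $n$, $x$, $V$), hence a fortiori sub-exponential.

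I expect the main obstacle to be making the "replace a weak direction by a strong one" step uniform over all subspaces $V$ and all points, since a priori $V$ can be at a very small angle to $E^s$ or can mix the $E^i$'s in a degenerate way, and one must ensure the angle estimates that protect the determinant comparison do not degenerate along the orbit. This is precisely where domination (rather than mere invariance) is essential: the cone-invariance and the uniform backward/forward estimates it provides give angles bounded below uniformly after a bounded transient time $T$, and one absorbs the first $T$ iterates into the uniform constant $C$ using compactness of $M$ and continuity of $Df$. Once this uniform pointwise comparison
\[
\max_{V\subset T_xM}|\det Df_x^n|_V|\le C\max_{0\le i\le l}|\det Df_x^n|_{F^i}|\quad\text{for all }x\in M,\ n\in\mathbb{N}
\]
is established, one integrates over $M$, takes $\tfrac1n\log$, and passes to the $\limsup$; the constant $C$ contributes nothing, giving \eqref{V equals to F i}.
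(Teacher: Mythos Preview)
Your overall plan is correct and, at its core, takes a more elementary route than the paper. Both arguments share the outer reduction: dimensions $m<u=\dim E^u$ or $m>u+l$ are disposed of using uniform expansion on $E^u$ and contraction on $E^s$ (this is exactly the paper's Claim in the proof of Proposition~\ref{application to multi 1D centers}). The divergence is in the core comparison $\max_{\dim V=\dim F^i}|\det Df^n_x|_V|$ versus $|\det Df^n_x|_{F^i}|$. The paper proves this only up to a sub-exponential factor $e^{n\varepsilon}$, and does so indirectly: it first establishes $\lambda(x,V)\le\lambda(x,F(x))$ only on the Lyapunov regular set via Oseledets' theorem (Lemma~\ref{Oseledet flag for domination}), then lifts to the Grassmannian $Gr(k,M)$ and applies Cao's lemma (Lemma~\ref{uniform upper bound for all points}) to promote the full-measure statement to a uniform one over all $x\in M$. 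Your route --- a direct pointwise bound with a \emph{uniform constant} $C$ independent of $n,x,V$ --- is actually a stronger conclusion and bypasses Oseledets, the Grassmannian lift, and Cao entirely. The cleanest way to make your step precise is via exterior powers: decomposing $\Lambda^k TM=\bigoplus_{i}\Lambda^iE\wedge\Lambda^{k-i}F$ (invariant pieces), the norm of $(Df^n)^{\wedge k}$ on each summand with $i\ge1$ is bounded by $\big(\|Df^n|_E\|/m(Df^n|_F)\big)^i\cdot|\det Df^n|_F|\le(C\lambda^n)^i|\det Df^n|_F|$ by domination, and summing gives $\max_{\dim V=k}|\det Df^n|_V|\le C'|\det Df^n|_F|$ uniformly.

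One caution on formulation: your ``key sub-lemma'' as written is garbled --- the $\max$ has a single argument, and $V\cap A$ is generically $\{0\}$, so the displayed inequality does not encode the intended decomposition --- and the phrase ``iterating across the $l+1$ splittings'' is slightly misleading. What you actually need is not an iteration but a separate application of the single-splitting bound $\max_{\dim V=k}|\det Df^n|_V|\le C'|\det Df^n|_F|$ to each of the dominated splittings $(E^s\oplus E^1\oplus\cdots\oplus E^i)\oplus_\prec F^i$, one for each $0\le i\le l$. With that reformulation your argument goes through and is shorter than the paper's.
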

In order to prove the above result, we first show a general result that for a dominated splitting $TM=E\oplus_{\prec}F$, the maximal growth rate over subspaces with dimension $\dim F$ is uniformly bounded above by the growth rate of $F$.

\begin{Proposition}\label{relations between lambda and kappa}
	Let $f$ be a $C^1$ diffeomorphism on a compact manifold $M$. Assume there is a dominated splitting $TM=E\oplus_{\prec}F$. Then
	$$\limsup_{n\to+\infty}\frac{1}{n}\max_{x\in M} \bigg(\log\max_{\substack{V\subset T_{x}M\\\dim V=\dim F}}|\det Df_{x}^{n}|_{V}|- \log|\det Df_{x}^{n}|_{F}|\bigg)=0.$$
\end{Proposition}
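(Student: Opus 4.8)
The plan is to show that any subspace $V\subset T_xM$ with $\dim V=\dim F$ can, after a controlled number of iterations, be made to align arbitrarily well with the invariant bundle $F$, and that this alignment costs only a subexponential factor in the volume determinant. Write $d=\dim F$. The key geometric input is domination: the splitting $TM=E\oplus_\prec F$ induces a contraction toward $F$ on the Grassmannian bundle of $d$-planes. Concretely, for the cone $\mathcal C^\alpha_F$ there is a uniform $T\in\mathbb N$ such that $Df^T$ maps $\mathcal C^\alpha_F$ strictly inside itself, and more importantly, for \emph{any} $d$-plane $V$ (not necessarily in the cone), iterating by $Df^n$ drives the ``angle'' between $Df^n_xV$ and $F_{f^nx}$ to zero at a uniform exponential rate once one is inside the cone; outside the cone, the worst that can happen is that the approach is delayed. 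First I would make this precise: there is a function $\theta(n)\to 0$ (in fact exponentially, but I only need $\to 0$) such that for every $x$ and every $d$-plane $V$, the plane $Df^n_xV$ lies within angle $\theta(n-n_0(V))$ of $F_{f^nx}$, where $n_0(V)$ is an entry time into the cone; and $n_0(V)$ can itself be bounded — here is where a little care is needed, since a $d$-plane transverse to $F$ with a large component in $E$ need not enter the cone quickly, but because $E$ is the \emph{weaker} bundle, the component along $E$ gets exponentially suppressed relative to the component along $F$, so entry into the cone happens in bounded time unless $V$ is (nearly) contained in a plane complementary to $F$ — and such planes have small determinant growth anyway.

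The second step is to convert the angle estimate into a determinant estimate. If $V$ is a $d$-plane making angle at most $\theta$ with $F_x$, then $|\det Df^n_x|_V|$ and $|\det Df^n_x|_{F}|$ differ by a factor controlled by $\theta$ and by the norms $\|Df^n\|,\|Df^{-n}\|\le C^n$ on the relevant pieces. The clean way to organize this: decompose $V$ as a graph over $F_x$ of a linear map $F_x\to E_x$ of norm $\le\theta$; then $Df^n_x|_V$ is conjugate to $Df^n_x|_{F_x}$ composed with ``error'' maps whose determinants are $1+O(\theta\cdot(\text{ratio of norms}))$, and domination forces that ratio to stay bounded — this is exactly the mechanism already used in Lemma \ref{volume growth of dynamical ball} (see Equation (\ref{small perturbation})) and in (\ref{expansion error}). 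So for $n$ large and $V$ already in the cone, $\bigl|\log|\det Df^n_x|_V|-\log|\det Df^n_x|_F|\bigr|$ is bounded by a constant independent of $n$ and $x$.

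Putting these together: fix $\varepsilon>0$. For a $d$-plane $V$ at $x$, apply $Df^{n_0}$ to enter the cone (losing at most a factor $C^{n_0}$, with $n_0$ bounded by a constant $N_\varepsilon$ — or, in the exceptional near-degenerate case, the determinant is already too small to matter), then apply the remaining $Df^{n-n_0}$ along a near-$F$ plane (losing at most a bounded multiplicative constant relative to $|\det Df^{n-n_0}|_F|$), and finally absorb the discrepancy between $|\det Df^n|_F|$ and $|\det Df^{n_0}|_{\cdot}|\cdot|\det Df^{n-n_0}|_F|$ using that $F$ is invariant (so this is an exact cochain identity, not an inequality). The net effect is $\log\max_V|\det Df^n_x|_V| - \log|\det Df^n_x|_F|\le C(N_\varepsilon)$, a constant independent of $n$; dividing by $n$ and taking $\limsup$ gives $0$ from above, while the reverse inequality $\ge 0$ is trivial since $F$ is one of the competing subspaces. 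I expect the main obstacle to be the uniformity of the entry time $n_0(V)$ into the cone over all $x$ and all $d$-planes $V$ simultaneously: one must argue that the set of ``bad'' planes (those requiring long entry times) is confined to a neighborhood of the $\dim E$-dimensional family of planes meeting $F$ non-transversally, and that on such planes $|\det Df^n|_V|$ is automatically dominated by $|\det Df^n|_F|$ because replacing a direction in $V$ that lies along $E$ by one along $F$ only increases the determinant (again by domination). Formalizing ``only increases'' uniformly — via a singular-value / exterior-power comparison $\|(Df^n_x)^{\wedge d}\|$ achieved on the most-expanded $d$-plane, together with the fact that the $d$ top singular directions converge to $F$ — is the crux, and I would lean on the dominated-splitting characterization of the fastest-growing exterior power to carry it out.
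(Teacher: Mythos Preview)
Your approach is genuinely different from the paper's, and --- in its final form --- would actually yield more than the proposition states. The paper does not work with cones or singular directions at all. Instead it passes to the Grassmannian $Gr(k,M)$ of $k$-planes ($k=\dim F$), defines the continuous function $\phi(V_x)=\log|\det Df_x|_{V_x}|-\log|\det Df_x|_{F(x)}|$, shows via Oseledets that $\int\phi\,d\widetilde\mu\le0$ for every $\widetilde f$-invariant measure $\widetilde\mu$ on $Gr(k,M)$ (this is Lemma~\ref{Oseledet flag for domination}), and then invokes Cao's lemma (Lemma~\ref{uniform upper bound for all points}) to upgrade this measure-theoretic inequality to a uniform statement over all points and planes. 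This is a soft, ergodic-theoretic route and produces only the $o(n)$ conclusion stated.

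Your plan is geometric and direct, but the cone-entry portion is a red herring: a $k$-plane $V$ with $V\cap E_x\ne\{0\}$ \emph{never} enters $\mathcal C^\alpha_F$ (since $E$ is invariant), and planes graphed over $F$ with large slope take arbitrarily long --- so there is no uniform $N_\varepsilon$ as you posit. You do catch this (``the main obstacle''), and your proposed repair is the right one, but it renders the cone argument superfluous. The clean execution is exactly the exterior-power comparison you defer to at the end: decompose $\wedge^k T_xM=\bigoplus_{j}\wedge^{k-j}E_x\wedge\wedge^j F_x$ (the blocks are invariant since $E,F$ are), and use domination $\|Df^n|_E\|\le C\lambda^n\, m(Df^n|_F)$ to bound the norm of $\wedge^k Df^n$ on each block with $j<k$ by $(C\lambda^n)^{k-j}|\det Df^n|_F|$, while the $j=k$ block contributes exactly $|\det Df^n|_F|$. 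With the angle between $E$ and $F$ bounded below by compactness, this yields $\max_V|\det Df^n_x|_V|\le C'|\det Df^n_x|_F|$ for a constant $C'$ independent of $n$ and $x$ --- an $O(1)$ bound, stronger than the $o(n)$ the proposition asserts. So: drop the cone-entry scaffolding and lead with the exterior-power computation.
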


We postpone the proof of Proposition \ref{relations between lambda and kappa}. We next use Proposition \ref{relations between lambda and kappa}  to prove Proposition \ref{application to multi 1D centers} in this section.

\begin{proof}[Proof of Proposition \ref{application to multi 1D centers}]
First we note that in formula (\ref{V equals to F i}), the left side is always larger than or equal to the right side. Hence we next show the reverse. 

For simplicity, we write $s=\dim E^{s}$ and $u=\dim E^{u}$. Given $\varepsilon>0$, applying Proposition \ref{relations between lambda and kappa} for $l$ times,  there is some $T_{1}\in\mathbb{N}$ such that for any $x\in M$ and any $n\geq T_{1}$, 
\begin{equation} \label{V equals to F i epsilon version}
\max_{0\leq i\leq l}\max_{\substack{V\subset T_{x}M\\\dim V=u+l-i}}|\det Df_{x}^{n}|_{V}|\leq e^{n\varepsilon}\cdot\max_{0\leq i\leq l}|\det Df_{x}^{n}|_{F^{i}}|.
\end{equation}	

\begin{Claim}
	There exists some $T_{2}\in\mathbb{N}$ such that for any $1\leq i_{s}\leq s$, $1\leq i_{u}\leq u$, any $x\in M$ and any $n\geq T_{2}$, 
	
	\begin{equation}\label{stable component}
	\max_{\substack{V\subset T_{x}M\\\dim V=i_{s}+l+u}}|\det Df_{x}^{n}|_{V}|\leq\max_{\substack{V\subset T_{x}M\\\dim V=l+u}}|\det Df_{x}^{n}|_{V}|,
	\end{equation}
	
	\begin{equation}\label{unstable component}
	\max_{\substack{V\subset T_{x}M\\\dim V=i_{u}}}|\det Df_{x}^{n}|_{V}|\leq\max_{\substack{V\subset T_{x}M\\\dim V=u}}|\det Df_{x}^{n}|_{V}|.
	\end{equation}
\end{Claim}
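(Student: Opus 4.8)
The plan is to translate both inequalities into statements about the ordered singular values $\sigma_1(Df_x^n)\geq\sigma_2(Df_x^n)\geq\cdots\geq\sigma_d(Df_x^n)$ of $Df_x^n\colon T_xM\to T_{f^nx}M$, where $d=\dim M=s+l+u$. Recall the classical identity $\max_{V\subset T_xM,\ \dim V=k}|\det Df_x^n|_V|=\sigma_1(Df_x^n)\cdots\sigma_k(Df_x^n)$ (the maximum being realized on the span of the top $k$ right singular vectors). With it, (\ref{stable component}) is equivalent to $\sigma_{l+u+1}(Df_x^n)\cdots\sigma_{l+u+i_s}(Df_x^n)\leq 1$, which, since the $\sigma_i$'s are non-increasing and $1\leq i_s\leq s$, follows from $\sigma_{l+u+1}(Df_x^n)\leq 1$; similarly (\ref{unstable component}) is equivalent to $\sigma_{i_u+1}(Df_x^n)\cdots\sigma_u(Df_x^n)\geq 1$ (recall $i_u\leq u$), which follows from $\sigma_u(Df_x^n)\geq 1$. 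So it suffices to produce $T_2$, uniform in $x$, with $\sigma_{l+u+1}(Df_x^n)\leq 1\leq\sigma_u(Df_x^n)$ for all $x\in M$ and $n\geq T_2$.

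These two bounds come directly from the uniform hyperbolicity of the extreme bundles via the Courant--Fischer min--max characterization. For the lower one, $\sigma_u(Df_x^n)=\max_{\dim W=u}\min_{v\in W\setminus\{0\}}\|Df_x^nv\|/\|v\|\geq m(Df_x^n|_{E^u(x)})$; and since $E^u$ is uniformly expanding, writing $w=Df_x^nv\in E^u(f^nx)$ for $v\in E^u(x)$ gives $\|v\|=\|Df_{f^nx}^{-n}w\|\leq C\lambda^n\|w\|$, so $m(Df_x^n|_{E^u(x)})\geq C^{-1}\lambda^{-n}$. For the upper one, note $l+u+1=d-s+1$, so $\sigma_{l+u+1}(Df_x^n)=\min_{\dim W=s}\max_{v\in W\setminus\{0\}}\|Df_x^nv\|/\|v\|\leq\|Df_x^n|_{E^s(x)}\|\leq C\lambda^n$ by the uniform contraction of $E^s$. (Equivalently, any $(l+u+1)$-dimensional $V$ meets $E^s(x)$ in dimension at least $1$, hence contains a unit vector that $Df_x^n$ contracts by at least $C\lambda^n$, so $\min_{v\in V\setminus\{0\}}\|Df_x^nv\|/\|v\|\leq C\lambda^n$ for every such $V$.) Choosing $T_2$ with $C\lambda^{T_2}\leq 1$ makes both $C\lambda^n\leq 1$ and $C^{-1}\lambda^{-n}\geq 1$ hold for all $n\geq T_2$, uniformly in $x$, which finishes the Claim.

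The argument is essentially immediate once one adopts the singular-value viewpoint; I do not expect a genuine obstacle, only a little index bookkeeping: one must keep track that the relevant thresholds are exactly $l+u+1=\dim M-\dim E^s+1$ and $u=\dim E^u$, and observe that because $E^s$ is uniformly contracted while $E^u$ is uniformly expanded (with the same constants $C,\lambda$), the two required inequalities are crossed at the same time, so a single $T_2$ serves for both and for every $x\in M$. Conceptually the Claim just records that for large $n$ the maximum of $|\det Df_x^n|_V|$ over all subspaces $V$ is attained on some $V$ whose dimension lies between $u=\dim E^u$ and $u+l=\dim(E^1\oplus\cdots\oplus E^l\oplus E^u)$ --- i.e. ``between $E^u$ and $F^0$'' --- which is exactly what is needed, together with (\ref{V equals to F i epsilon version}), to finish the proof of Proposition~\ref{application to multi 1D centers}.
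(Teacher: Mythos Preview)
Your proof is correct and takes a genuinely different route from the paper's. You translate both inequalities into statements about singular values via the identity $\max_{\dim V=k}|\det Df_x^n|_V|=\prod_{j=1}^k\sigma_j(Df_x^n)$ and then use the Courant--Fischer min--max principle to bound $\sigma_u$ from below by $m(Df_x^n|_{E^u})$ and $\sigma_{l+u+1}=\sigma_{d-s+1}$ from above by $\|Df_x^n|_{E^s}\|$. The paper instead argues by direct geometric construction: for (\ref{stable component}) it notes that any $(i_s+l+u)$-dimensional $V$ meets $E^s(x)$ in a subspace $V_s$ of dimension $\geq i_s$, takes $V_s^\perp$ its orthogonal complement in $V$, and uses the Hadamard-type bound $|\det Df_x^n|_V|\leq|\det Df_x^n|_{V_s}|\cdot|\det Df_x^n|_{V_s^\perp}|$ together with $|\det Df_x^n|_{V_s}|\leq 1$; for (\ref{unstable component}) it enlarges any $i_u$-dimensional $V$ by a $(u-i_u)$-dimensional piece of $E^u$ chosen so that the images under $Df_x^n$ are orthogonal, and reverses the Hadamard inequality by passing to $f^{-n}$. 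Your singular-value approach is slicker and handles both cases uniformly, at the mild cost of invoking standard linear-algebra machinery; the paper's approach is more self-contained and makes the role of the invariant bundles $E^s,E^u$ explicit in the construction. Both yield the same choice of $T_2$ (any $T_2$ with $C\lambda^{T_2}\leq 1$).
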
 
We postpone the proof of the above claim. We use it to show that the right side is larger than or equal to the left side in formula (\ref{V equals to F i}). By considering the dimension of subspaces, formula (\ref{stable component}) and formula (\ref{unstable component}) give that for any $x\in M$ and any $n\geq T_{2}$, $$\max_{V\subset T_{x}M}|\det Df_{x}^{n}|_{V}|\leq \max_{0\leq i\leq l}\max_{\substack{V\subset T_{x}M\\\dim V=u+i}}|\det Df_{x}^{n}|_{V}|=\max_{0\leq i\leq l}\max_{\substack{V\subset T_{x}M\\\dim V=u+l-i}}|\det Df_{x}^{n}|_{V}|.$$

Together with formula (\ref{V equals to F i epsilon version}), we then get that for any $x\in M$ and any $n\geq \max\{T_{1},T_{2}\}$, $$\max_{V\subset T_{x}M}|\det Df_{x}^{n}|_{V}|\leq e^{n\varepsilon}\cdot\max_{0\leq i\leq l}|\det Df_{x}^{n}|_{F^{i}}|.$$

This implies $$	\limsup_{n\to\infty}\frac{1}{n}\log\int\max_{V\subset T_{x}M}|\det Df_{x}^{n}|_{V}|\,d x\leq \varepsilon+\limsup_{n\to\infty}\frac{1}{n}\log\int\max_{0\leq i\leq l}|\det Df_{x}^{n}|_{F^{i}}|\,d x.$$

By the arbitrariness of $\varepsilon$, we then get formula (\ref{V equals to F i}).
It remains to prove the claim.
\begin{proof}[Proof of Claim]

Let $T_{2}$ be a positive integer such that for any $x\in M$ and any $v\in E^{s}_{x}, w\in E^{u}_{x}$ with $||v||=1, ||w||=1$, $$||Df^{n}_{x}(v)||<1,\quad ||Df^{-n}_{x}(w)||<1,\quad \forall \,n\geq T_{2}.$$

Given $x\in M$, consider a subspace $V\subset T_{x}M$ with $\dim V=i_{s}+l+u$. Choose any $i_{s}$ dimensional subspace $V_{s}\subset E^{s}_{x}\cap V$. Let $V_{s}^{\perp}$ be the orthogonal subspace of $V_{s}$ in $V$. We note that $V_{s}^{\perp}$ has dimension $l+u$. Since $V_{s}\subset E^{s}_{x}$, for any $ n\geq T_{2}$, 


	$$\begin{aligned}
|\det Df_{x}^{n}|_{V}|
	&\leq|\det Df_{x}^{n}|_{V_{s}^{\perp}}|\times|\det Df_{x}^{n}|_{V_{s}}|\\
	&\leq|\det Df_{x}^{n}|_{V_{s}^{\perp}}|.
	\end{aligned}$$

This proves inequality (\ref{stable component}). 

Next we consider a subspace $V\subset T_{x}M$ with $\dim V=i_{u}$. For any $n\geq T_{2}$, let $(Df^{n}_{x}(V))^{\perp}$ be the orthogonal subspace of $Df^{n}_{x}(V)$ in $T_{f^{n}x}M$. We note that $(Df^{n}_{x}(V))^{\perp}$ has dimension $\dim M-i_{u}$. Hence we can find a $u-i_{u}$ dimensional subspace $$V_{u}^{\perp}\subset Df^{-n}_{f^{n}x}\big((Df^{n}_{x}(V))^{\perp}\cap E^{u}_{f^{n}x}\big).$$

By definition, since $Df^{n}_{x}(V_{u}^{\perp})$ is orthogonal to $Df^{n}_{x}(V)$ and $Df^{n}_{x}(V_{u}^{\perp})\subset E^{u}_{f^{n}x}$, $V\oplus V_{u}^{\perp}$ has $u$ dimension and 

	$$\begin{aligned}
|\det Df_{x}^{n}|_{V\oplus V_{u}^{\perp}}|
	&=\bigg(|\det Df_{f^{n}x}^{-n}|_{Df^{n}_{x}(V)\oplus Df^{n}_{x}(V_{u}^{\perp})}|\bigg)^{-1}\\
	&\geq\bigg(|\det Df_{f^{n}x}^{-n}|_{Df^{n}_{x}(V)}|\times|\det Df_{f^{n}x}^{-n}|_{Df^{n}_{x}(V_{u}^{\perp})}|\bigg)^{-1}\\
	&\geq\bigg(|\det Df_{f^{n}x}^{-n}|_{Df^{n}_{x}(V)}|\bigg)^{-1}\\
	&= |\det Df_{x}^{n}|_{V}|.
	\end{aligned}$$

This proves the inequality (\ref{unstable component}).
\end{proof}

\end{proof}

Now it remains to prove Proposition \ref{relations between lambda and kappa}. We prepare some lemmas.

We next introduce the classical Oseledets' Theorem \cite{Ose68}. For more detailed discussions, see the book \cite{BaY07} and Appendix C.1 in \cite{BDV05}.

Let $f$ be a homeomorphism on a compact manifold $M$. Let $\pi:\mathscr{E}\to M$ be a finite-dimensional continuous vector bundle over $M$ endowed with the norm $||\cdot||$ induced by some inner product. Assume that $\Phi:\mathscr{E}\to\mathscr{E}$ is a continuous vector bundle automorphism. Write $\mathscr{E}_{x}$ the fiber of $\mathscr{E}$ at the point $x$ and $\Phi_{x}:\mathscr{E}_{x}\to\mathscr{E}_{f(x)}$ the action of $\Phi$ at the fiber $\mathscr{E}_{x}$. Define the linear cocycle induced by $\Phi$ as 
\begin{equation*}
\Phi^{n}_{x}\triangleq\left\{
\begin{array}{rcl}
\Phi_{f^{n-1}(x)}\circ \Phi_{f^{n-2}(x)}\circ\cdots\circ\Phi_{x}, & & {n\geq 1;}\\
\rm{Id}, & & {n=0;}\\
\Phi^{-1}_{f^{-n}(x)}\circ \Phi^{-1}_{f^{-n+1}(x)}\circ\cdots\circ\Phi^{-1}_{f^{-1}(x)}, & & {n\leq -1.}
\end{array} \right.
\end{equation*}

The Oseledets' Theorem \cite{Ose68} states that

\begin{Theorem}\label{Oseledets Theorem}
	Consider the system $(M, f, \mathscr{E},\Phi)$ as above. There are an $f$-invariant subset $R$ with total measure($\mu(R)=1$ for any invariant measure $\mu$), an $\Phi$-invariant measurable decomposition $\mathscr{E}=\mathscr{E}^{1}\oplus \mathscr{E}^{2}\oplus\cdots\oplus \mathscr{E}^{l}$ and finitely many measurable functions $\lambda_{1}(\cdot)<\lambda_{2}(\cdot)<\cdots<\lambda_{\rho(x)}(\cdot)$ such that for any $x\in R$ and any nonzero vector $v\in \mathscr{E}^{j}_{x}$, we have $$\lim_{n\to \pm\infty}\frac{1}{n}\log||\Phi^{n}_{x}(v)||=\lambda_{j}(x),$$
Moreover, for any $j\leq \rho(x)$,	$$\lim_{n\to \pm\infty}\frac{1}{n}\log|\det Df^{n}_{x}|_{\oplus_{i=1}^{j}\mathscr{E}^{i}}|=\sum_{i=1}^{j}\lambda_{i}(x) \cdot\dim \mathscr{E}^{i}.$$
\end{Theorem}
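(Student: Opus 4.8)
The plan is to carry out the classical proof of the multiplicative ergodic theorem, which in the present setting is simplified by the compactness of $M$ and the continuity of $\Phi$: the functions $x\mapsto\log\|\Phi_x\|$ and $x\mapsto\log\|\Phi_x^{-1}\|$ are bounded, so every cocycle appearing below is uniformly integrable and all the Lyapunov exponents are automatically finite. By the ergodic decomposition it suffices to treat a fixed ergodic $\mu$: the total-measure set $R$ is then taken to be the set of points at which all the countably many Birkhoff and Kingman limits used below converge and the linear-algebraic conclusions hold; this set has full measure for every ergodic measure, hence (by the ergodic decomposition) for every invariant measure. Fix ergodic $\mu$. For $1\le k\le d:=\operatorname{rank}\mathscr{E}$ the sequence $a^{(k)}_n(x)=\log\|\Lambda^k\Phi^n_x\|$ is subadditive along the orbit, since $\Lambda^k\Phi^{n+m}_x=(\Lambda^k\Phi^m_{f^nx})\circ(\Lambda^k\Phi^n_x)$ and $\|\Lambda^k(AB)\|\le\|\Lambda^kA\|\,\|\Lambda^kB\|$; Kingman's subadditive ergodic theorem then gives a.e. constants $\Lambda_k=\lim_n\tfrac1n a^{(k)}_n(x)$ (with $\Lambda_0=0$). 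Since $\|\Lambda^kA\|=\sigma_1(A)\cdots\sigma_k(A)$, the map $k\mapsto\Lambda_k$ is concave, so $\Lambda_1-\Lambda_0\ge\Lambda_2-\Lambda_1\ge\dots\ge\Lambda_d-\Lambda_{d-1}$ is a nonincreasing list of $d$ numbers; its distinct values, written increasingly, are $\lambda_1<\dots<\lambda_\rho$, and one sets $\dim\mathscr{E}^j=:m_j$ equal to the number of indices $k$ realizing the value $\lambda_j$, so that $\Lambda_{m_1+\dots+m_j}=\sum_{i\le j}m_i\lambda_i$ for each $j$.

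Next I build the Oseledets splitting from a forward and a backward flag. Define $\{0\}=W^0_x\subsetneq\dots\subsetneq W^\rho_x=\mathscr{E}_x$ by $W^j_x=\{v\in\mathscr{E}_x:\limsup_{n\to+\infty}\tfrac1n\log\|\Phi^n_xv\|\le\lambda_j\}$, and $\mathscr{E}_x=U^1_x\supsetneq\dots\supsetneq U^{\rho+1}_x=\{0\}$ by $U^j_x=\{v\in\mathscr{E}_x:\limsup_{n\to+\infty}\tfrac1n\log\|\Phi^{-n}_xv\|\le-\lambda_j\}$; both are $\Phi$-equivariant and measurable in $x$. Comparing the growth of decomposable $k$-vectors built from a basis of $W^j_x$ with the exterior-power rate $\Lambda_k$ of the previous step, and running the same argument for the inverse cocycle $\Phi^{-1}$ over $f^{-1}$ (a standard but delicate computation), yields $\dim W^j_x=m_1+\dots+m_j$ and $\dim U^j_x=m_j+\dots+m_\rho$. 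Put $\mathscr{E}^j_x:=W^j_x\cap U^j_x$. A nonzero vector in $W^{j-1}_x$ has forward rate $\le\lambda_{j-1}$ while one in $U^j_x$ has forward rate $\ge\lambda_j$, so $W^{j-1}_x\cap U^j_x=\{0\}$; the dimension identities then force $\dim\mathscr{E}^j_x=m_j$ and $\mathscr{E}_x=\bigoplus_{j=1}^{\rho}\mathscr{E}^j_x$, and a nonzero $v\in\mathscr{E}^j_x$ satisfies $\limsup_{n\to+\infty}\tfrac1n\log\|\Phi^n_xv\|\le\lambda_j$ together with $\liminf_{n\to+\infty}\tfrac1n\log\|\Phi^n_xv\|\ge\lambda_j$ (the latter because $v\in U^j_x$), so the forward limit exists and equals $\lambda_j$; the backward limit is handled symmetrically.

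To finish — in particular to get the determinant formula — one needs that the angle function $\vartheta(x)=-\log\bigl(\text{least principal angle among }\mathscr{E}^1_x,\dots,\mathscr{E}^\rho_x\bigr)$, which is finite $\mu$-a.e. by the transversality just established, is \emph{tempered}, i.e. $\tfrac1n\vartheta(f^nx)\to0$ a.e. This is the classical slow-variation lemma: one application of $\Phi$ distorts these angles by at most a bounded multiplicative factor (again by compactness and continuity), so $n\mapsto\vartheta(f^nx)$ has increments controlled by a Birkhoff-summable quantity, and Birkhoff's ergodic theorem forces subexponential growth. Granting this, write $\Phi^n_x$ in block form relative to $\mathscr{E}_x=\bigoplus\mathscr{E}^i_x$ and $\mathscr{E}_{f^nx}=\bigoplus\mathscr{E}^i_{f^nx}$; the subexponential angle defect lets one compare $|\det\Phi^n_x|_{\oplus_{i\le j}\mathscr{E}^i_x}|$ with $\prod_{i\le j}|\det\Phi^n_x|_{\mathscr{E}^i_x}|$, and since the cocycle restricted to $\mathscr{E}^i_x$ has all Lyapunov exponents equal to $\lambda_i$ (so its top exterior-power rate is $\lambda_i\dim\mathscr{E}^i$) one obtains $\tfrac1n\log|\det\Phi^n_x|_{\oplus_{i\le j}\mathscr{E}^i_x}|\to\sum_{i\le j}\lambda_i\dim\mathscr{E}^i$, which matches $\Lambda_{m_1+\dots+m_j}$ and is the last displayed identity; the two-sided convergence $n\to\pm\infty$ on each $\mathscr{E}^j_x$ comes out the same way.

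The main obstacle is the second and third paragraphs, and everything there hinges on a single phenomenon: the forward and backward filtrations have exactly the dimensions prescribed by the Kingman numbers and meet transversally, and the resulting splitting varies along orbits in a subexponential (tempered) way. This is precisely where invertibility of the cocycle is used essentially — the one-sided theory only produces flags, never the splitting — and it is the technical heart of the argument; by contrast the Kingman step producing the numbers $\lambda_j$ and the final bookkeeping with determinants are routine once all the relevant cocycles are known to be bounded.
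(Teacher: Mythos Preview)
The paper does not prove this statement at all: Theorem~\ref{Oseledets Theorem} is simply quoted as the classical Oseledets multiplicative ergodic theorem, with a citation to \cite{Ose68} and pointers to \cite{BaY07} and Appendix~C.1 of \cite{BDV05} for details. So there is no ``paper's own proof'' to compare against --- the authors treat this as background.

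Your sketch is a reasonable outline of the standard Raghunathan--Ruelle approach (Kingman on exterior powers to get the exponents, forward and backward filtrations intersected to get the splitting, temperedness of angles to pass to determinants). As a sketch it is fine, though the phrase ``a standard but delicate computation'' in the second paragraph is exactly where the real work lies: proving that $\dim W^j_x$ and $\dim U^j_x$ are what you claim requires either the polar-decomposition/singular-value argument of Ruelle or a careful induction, and your proposal waves at it rather than doing it. That is acceptable for a statement the paper itself only cites, but be aware that this is the step that would need genuine detail if you were asked to write a full proof.
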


The set $R$ in Theorem \ref{Oseledets Theorem} is called the \emph{Lyapunov regular set} and $x\in R$ is called a Lyapunov regular point. These numbers $\lambda_{1}(\cdot)<\lambda_{2}(\cdot)<\cdots<\lambda_{\rho(x)}(\cdot)$ are called the \emph{Lyapunov exponents} of $x$ and the splitting $\mathscr{E}=\mathscr{E}^{1}\oplus \mathscr{E}^{2}\oplus\cdots\oplus \mathscr{E}^{l}$ is called the Osedelets' splitting. Usually the set $R$ is not the whole manifold $M$. 

%

Let $f$ be a $C^1$ diffeomorphism on a compact manifold $M$. Now we apply the abstract Oseledets' Theorem (Theorem \ref{Oseledets Theorem}) to our situation with $\mathscr{E}$ being the tangent bundle and $\Phi$ being the derivative $Df$.

\begin{Lemma}\label{Oseledet flag for domination}
Let $f$ be a $C^1$ diffeomorphism on a compact manifold $M$. Assume there is a dominated splitting $TM=E\oplus_{\prec}F$. There is an $f$-invariant subset $R$ (the Lyapunov regular set) such that for any $x\in R$ and any $V_{x}\in T_{x}M$ with $\dim V_{x}=\dim F$,  $$\lambda(x,V_{x})\leq\lambda(x,F(x)).$$
\end{Lemma}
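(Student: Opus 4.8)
We want to show that for a Lyapunov regular point $x\in R$ and any subspace $V_x\subset T_xM$ with $\dim V_x=\dim F$, the volume growth exponent $\lambda(x,V_x)$ (meaning $\lim_{n\to\pm\infty}\frac1n\log|\det Df^n_x|_{V_x}|$, when it exists) is at most $\lambda(x,F(x))$. Actually wait — for an arbitrary $V_x$, does $\lambda(x,V_x)$ even exist as a limit? For a Lyapunov regular point, any subspace decomposes with respect to the Oseledets splitting, and the volume growth along $V_x$ is computed via the exterior power cocycle $\wedge^k Df$, which is also Lyapunov regular, so $\lambda(x,V_x)$ exists (it's the largest Lyapunov exponent of $\wedge^k Df$ among decomposable vectors whose span contains... hmm, actually $\wedge^k V_x$ is a single line in $\wedge^k T_xM$, so its growth rate is a well-defined Lyapunov exponent of the exterior cocycle). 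Good.

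**The approach.** The standard fact: at a Lyapunov regular point, among all $k$-dimensional subspaces, the maximal volume growth rate equals the sum of the $k$ largest Lyapunov exponents (counted with multiplicity), and this is achieved by the span of the top Oseledets subspaces. So the claim reduces to: the sum of the $\dim F$ largest Lyapunov exponents equals $\lambda(x,F(x))$. This is where domination enters: the dominated splitting $TM=E\oplus_\prec F$ forces every Lyapunov exponent along $F$ to be strictly larger than every Lyapunov exponent along $E$. Hence the $\dim F$ largest Lyapunov exponents are exactly the ones along $F$, and $\lambda(x,F(x))=\sum$ (exponents along $F$) $=$ sum of the $\dim F$ largest exponents. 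Combining, $\lambda(x,V_x)\le\lambda(x,F(x))$.

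**The plan, step by step.** First I would invoke the Oseledets theorem (Theorem \ref{Oseledets Theorem}) applied to $Df$ to get the regular set $R$ and the Oseledets splitting $T_xM=\bigoplus_j \mathscr E^j_x$ with exponents $\lambda_1(x)<\dots<\lambda_{\rho(x)}(x)$. Second, I would observe that because $E$ and $F$ are $Df$-invariant and the splitting $E\oplus F$ is dominated, the Oseledets splitting refines $E\oplus F$: each $\mathscr E^j_x$ lies entirely in $E(x)$ or entirely in $F(x)$, and moreover every exponent realized on $F(x)$ exceeds every exponent realized on $E(x)$ (this uses the domination inequality $\|Df^k v_E\|/\|v_E\|\le C\lambda^k\|Df^k v_F\|/\|v_F\|$, taking $\frac1k\log$ and letting $k\to\infty$). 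Third, I would use the exterior-power version of Oseledets (or just a direct linear-algebra computation): for any $k$-dimensional $V_x$, writing $V_x$ in terms of the Oseledets flag, $|\det Df^n_x|_{V_x}|$ grows at rate $\sum$ of some $k$ exponents with multiplicities not exceeding $\dim\mathscr E^j$, hence at rate at most the sum $\Sigma_k$ of the $k$ largest exponents. Fourth, since $F(x)$ is a union of whole Oseledets blocks carrying exactly the top $\dim F$ exponents (by domination), the second displayed formula in Theorem \ref{Oseledets Theorem} gives $\lambda(x,F(x))=\Sigma_{\dim F}$. Taking $k=\dim F=\dim V_x$ finishes it.

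**Main obstacle.** The genuinely substantive point is the upper bound $\lambda(x,V_x)\le\Sigma_k$ for an \emph{arbitrary} $k$-subspace $V_x$ — i.e. that no clever choice of $V_x$ transverse to the Oseledets subspaces can beat the sum of the top exponents. The clean way is: pick an orthonormal basis $v_1,\dots,v_k$ of $V_x$ adapted to the Oseledets filtration $\mathscr F^1\subset\mathscr F^2\subset\cdots$ (where $\mathscr F^i=\bigoplus_{j\le i}\mathscr E^j$), so that each $v_m$ lies in some $\mathscr F^{i(m)}\setminus\mathscr F^{i(m)-1}$ with $i(1)<i(2)<\dots$ impossible to exceed multiplicities; then $\|Df^n_x v_m\|$ grows at rate $\lambda_{i(m)}(x)$ and by Hadamard's inequality $|\det Df^n_x|_{V_x}|\le\prod_m\|Df^n_x v_m\|$, giving growth rate $\le\sum_m\lambda_{i(m)}(x)\le\Sigma_k$. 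One must check the index collision constraint is exactly the multiplicity bound $\dim\mathscr E^j$ — routine but needs care. I expect everything else (the domination $\Rightarrow$ refinement argument, and reading off $\lambda(x,F(x))$) to be short.

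I would write it as follows.

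\begin{proof}
Apply Theorem \ref{Oseledets Theorem} to $\mathscr E=TM$ and $\Phi=Df$, obtaining the Lyapunov regular set $R$, the Oseledets splitting $T_xM=\mathscr E^1_x\oplus\cdots\oplus\mathscr E^{\rho(x)}_x$ and exponents $\lambda_1(x)<\cdots<\lambda_{\rho(x)}(x)$. Since $E$ and $F$ are $Df$-invariant, for $x\in R$ each $\mathscr E^j_x$ splits as $(\mathscr E^j_x\cap E(x))\oplus(\mathscr E^j_x\cap F(x))$; but the growth rate on $\mathscr E^j_x$ is the single number $\lambda_j(x)$, so by the domination inequality
$$\frac{\|Df^k_x(v_E)\|}{\|v_E\|}\leq C\lambda^k\cdot\frac{\|Df^k_x(v_F)\|}{\|v_F\|}$$
(take $\frac1k\log$, let $k\to+\infty$) the values of $\lambda_j(\cdot)$ attained on $E(x)$ are strictly smaller than those attained on $F(x)$. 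Consequently there is an index $j_0$ with $E(x)=\bigoplus_{j\le j_0}\mathscr E^j_x$ and $F(x)=\bigoplus_{j> j_0}\mathscr E^j_x$, and the exponents carried by $F(x)$, listed with multiplicity $\dim\mathscr E^j_x$, are exactly the $\dim F$ largest exponents of $x$. By the last formula of Theorem \ref{Oseledets Theorem},
$$\lambda(x,F(x))=\sum_{j>j_0}\lambda_j(x)\dim\mathscr E^j_x=\sum_{i=1}^{\dim F}\widehat\lambda_i(x),$$
where $\widehat\lambda_1(x)\ge\widehat\lambda_2(x)\ge\cdots$ denotes the exponents of $x$ in nonincreasing order with multiplicity.

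Now let $V_x\subset T_xM$ with $k\eqdef\dim V_x=\dim F$. Set $\mathscr F^i_x=\bigoplus_{j\le i}\mathscr E^j_x$. Choose an orthonormal basis $v_1,\dots,v_k$ of $V_x$ such that, for each $m$, $v_m\in\mathscr F^{i(m)}_x$ with $v_m\notin\mathscr F^{i(m)-1}_x$ and $i(1)\le i(2)\le\cdots\le i(k)$, where the number of indices $m$ with $i(m)=j$ never exceeds $\dim\mathscr E^j_x$ (such a basis exists by choosing $v_m$ successively inside $V_x\cap\mathscr F^{i}_x$ for increasing $i$). For each $m$, since $v_m\in\mathscr F^{i(m)}_x\setminus\mathscr F^{i(m)-1}_x$ one has $\limsup_{n\to+\infty}\frac1n\log\|Df^n_x(v_m)\|\le\lambda_{i(m)}(x)$. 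By Hadamard's inequality,
$$|\det Df^n_x|_{V_x}|\le\prod_{m=1}^{k}\|Df^n_x(v_m)\|,$$
hence
$$\lambda(x,V_x)=\lim_{n\to+\infty}\frac1n\log|\det Df^n_x|_{V_x}|\le\sum_{m=1}^{k}\lambda_{i(m)}(x)\le\sum_{i=1}^{k}\widehat\lambda_i(x),$$
the last inequality because the multiset $\{\lambda_{i(m)}(x)\}_{m=1}^k$ respects the multiplicity bounds and therefore is dominated termwise, after sorting, by the top $k$ exponents. Combining with the previous paragraph (and $k=\dim F$) gives $\lambda(x,V_x)\le\lambda(x,F(x))$.
\end{proof}
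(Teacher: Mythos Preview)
Your proof is correct and follows essentially the same strategy as the paper's: both (i) use domination to show the Oseledets splitting refines $E\oplus F$ with $F(x)=\bigoplus_{j>j_0}\mathscr E^j_x$, (ii) identify $\lambda(x,F(x))$ with the sum of the top $\dim F$ exponents, and (iii) bound $\lambda(x,V_x)$ by choosing a basis of $V_x$ adapted to the Oseledets flag. The only cosmetic difference is in step (iii): the paper picks a basis of $V_x$ whose coordinate matrix with respect to a basis compatible with the $\mathscr E^j$'s has a lower-triangular right $k\times k$ block (forcing distinct ``leading indices'' $t_i$), whereas you phrase the same construction as an orthonormal basis adapted to the filtration $\mathscr F^i=\bigoplus_{j\le i}\mathscr E^j$ together with an explicit appeal to Hadamard's inequality. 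These are the same argument in different clothing; your version is arguably cleaner since Hadamard is stated rather than used implicitly.
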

\begin{proof}
Write $d=\dim M$ and $k=\dim F$. Let $R$ be the Lyapunov regular set in Theorem \ref{Oseledets Theorem}. We write $\mathscr{E}^{1}\oplus \mathscr{E}^{2}\oplus\cdots\oplus \mathscr{E}^{l}$ the Oseledets' splitting (See Theorem \ref{Oseledets Theorem}). It is sufficient to prove that for any $x\in R$,
\begin{enumerate}
	\item there is a number $j_{F}\leq \rho(x)$ ($\rho(x)$ is from Theorem \ref{Oseledets Theorem}) such that $\dim F=\sum_{i=j_{F}}^{\rho(x)}\dim \mathscr{E}^{i}$. 
	\item $$\lambda(x,F(x))=\sum_{i=j_{F}}^{\rho(x)}\lambda_{i}(x)\dim \mathscr{E}^{i}.$$
	\item for any $V_{x}\subset T_{x}M$ with $\dim V_{x}=\dim F$, $$\lambda(x,V_{x})\leq \sum_{i=j_{F}}^{\rho(x)}\lambda_{i}(x)\dim \mathscr{E}^{i}.$$
\end{enumerate}

Next we prove the three properties above.
\paragraph{Property 1.} It is a consequence of domination by noting that there is some $j_{F}$ such that $$E(x)=\oplus_{i=1}^{j_{F}-1} \mathscr{E}^{i}_{x}.\eqno(*)$$

\paragraph{Property 2.}By domination, the angles between $E$ and $F$ is uniformly bounded below. As a consequence, $$\lim_{n\to +\infty}\frac{1}{n}\log|\det Df^{n}_{x}|\leq \lambda(x,E(x))+\lambda(x,F(x)).$$ For $x\in R$, by Theorem \ref{Oseledets Theorem}, we have $$\lim_{n\to +\infty}\frac{1}{n}\log|\det Df^{n}_{x}|=\sum_{i=1}^{\rho(x)}\lambda_{i}(x) \cdot\dim \mathscr{E}^{i}$$ and together with the formula $(\ast)$ above, we also have $$\lambda(x,E(x))=\sum_{i=1}^{j_{F}-1}\lambda_{i}(x) \cdot\dim \mathscr{E}^{i}.$$
We then get the second property.

\paragraph{Property 3.}Given $x\in R$, write $d_{i}=\dim \mathscr{E}^{i}_{x}$. Let $e_{1}, e_{2},\cdots,e_{d}$ be a basis of $T_{x}M$ such that  $e_{1},e_{2},\cdots, e_{d_{1}}\in \mathscr{E}^{1}_{x}$, $e_{d_{1}+1},e_{d_{1}+2},\cdots, e_{d_{1}+d_{2}}\in \mathscr{E}^{2}_{x}$, $e_{d_{1}+d_{2}+1},e_{d_{1}+d_{2}+2},\cdots, e_{d_{1}+d_{2}+d_{3}}\in \mathscr{E}^{3}_{x}$ and so on. Let $$\{v^{i}=\sum_{j=1}^{d}\delta^{i}_{j}\cdot e_{j}\,|\,\delta^{i}_{j}\in\mathbb{R}\}_{1\leq i\leq k}$$ be a basis of the subspace $V_{x}$. Write the coordinate matrix $\mathbb{A}$ of $\{v^{i}\}$ w.r.t. $\{e_{j}\}$ as $$
\mathbb{A}=[\mathbb{B}|\mathbb{C}]=\left[
\begin{array}{cccc|cccc}
\delta^{1}_{1} & \delta^{1}_{2} & \delta^{1}_{3} & \cdots             & \delta^{1}_{d-k+1}& \cdots         & \delta^{1}_{d-1} & \delta^{1}_{d}\\
\delta^{2}_{1} & \delta^{2}_{2} & \delta^{2}_{3} & \cdots             & \delta^{2}_{d-k+1}& \cdots         & \delta^{2}_{d-1} & \delta^{2}_{d}\\
\delta^{3}_{1} & \delta^{3}_{2} & \delta^{3}_{3} & \cdots             & \delta^{3}_{d-k+1}& \cdots         & \delta^{3}_{d-1} & \delta^{3}_{d}\\
\vdots         & \vdots         & \vdots         & \vdots             & \vdots            & \ddots         & \vdots           & \vdots\\
\delta^{k}_{1} & \delta^{k}_{2} & \delta^{k}_{3} & \cdots             & \delta^{k}_{d-k+1}& \cdots         & \delta^{k}_{d-1} & \delta^{k}_{d}
\end{array}
\right] 
$$ where $\mathbb{B}$ is a $k\times (d-k)$ matrix and $\mathbb{C}$ is a $k\times k$ matrix. Up to a change of the basis of $V_{x}$, we may assume the basis $\{v^{i}\}$ is such that the $k\times k$ matrix $\mathbb{C}$ above is lower triangular: 
 $$
 \mathbb{C}=\left[
 \begin{matrix}
\ast	& 0    & 0        & 0     & \cdots   & 0\\
\ast	& \ast & 0        & 0     & \cdots   & 0\\
\ast	& \ast & \ast     & 0	  & \cdots	 & 0\\
\ast    & \ast & \cdots   & \ast  & \cdots   & 0\\
\vdots  & \vdots & \ddots        & \vdots   &\vdots  & \vdots\\
\ast    & \ast & \cdots        &\ast        &\ast      & \ast   
 \end{matrix}
 \right] 
 .$$


Define $$\lambda(x,v^{i})\triangleq\limsup_{n\to+\infty}\frac{1}{n}\log||Df^{n}_{x}(v^{i})||.$$ The way we choose the basis (s.t. $\mathbb{C}$ is lower triangular) implies that $\lambda(x,v^{i})=\lambda(x,e_{t_{i}})$ where $$t_{i}=\max\{ j\,|\,\delta^{i}_{j}\neq 0\}.$$ Hence $$\lambda(x,V_{x})\leq \sum_{i=1}^{k}\lambda(x,v^{i})=\sum_{i=1}^{k}\lambda(x,e_{t_{i}}).$$  Since different $v^{i}$ corresponds to different $e_{t_{i}}$, $$\sum_{i=1}^{k}\lambda(x,e_{t_{i}})\leq \sum_{i=j_{F}}^{\rho(x)}\lambda_{i}(x)\dim \mathscr{E}^{i}.$$ We get the third property.

\end{proof}

The following abstract result is from Lemma 2 in \cite{Cao03}.
\begin{Lemma}\label{uniform upper bound for all points}
	Let $T$ be a continuous map on a compact metric space $X$ and let $\phi:X\to\mathbb{R}$ be a continuous function. Assume that for any invariant measure $\mu$, $\int\phi\,d\mu\leq 0$. Then $$\limsup_{n\to+\infty}\max_{x\in X}\frac{1}{n}\sum_{j=0}^{n-1} \phi(T^{j}(x))\leq 0.$$
\end{Lemma}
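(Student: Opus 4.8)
\textbf{Proposal for the proof of Lemma \ref{uniform upper bound for all points}.}

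The plan is to argue by contradiction. Suppose the conclusion fails; then there is some $\eta>0$, a sequence of integers $n_{k}\to+\infty$, and points $x_{k}\in X$ such that
$$\frac{1}{n_{k}}\sum_{j=0}^{n_{k}-1}\phi(T^{j}(x_{k}))>\eta\qquad\text{for all }k.$$
For each $k$ I form the empirical measure $\mu_{k}=\frac{1}{n_{k}}\sum_{j=0}^{n_{k}-1}\delta_{T^{j}(x_{k})}$. These are Borel probability measures on the compact metric space $X$, so by weak-$*$ compactness of the space of probability measures (Prokhorov / Banach--Alaoglu) I may pass to a subsequence along which $\mu_{k}\to\mu$ in the weak-$*$ topology for some probability measure $\mu$ on $X$.

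The next step is the standard fact that any such weak-$*$ limit of empirical averages is $T$-invariant. Concretely, for any continuous $\psi\in C(X)$ one has $|\int \psi\circ T\,d\mu_{k}-\int\psi\,d\mu_{k}|=\frac{1}{n_{k}}|\psi(T^{n_{k}}(x_{k}))-\psi(x_{k})|\leq \frac{2\|\psi\|_{\infty}}{n_{k}}\to 0$, and passing to the limit (using continuity of $\psi\circ T$) gives $\int\psi\circ T\,d\mu=\int\psi\,d\mu$; hence $T_{*}\mu=\mu$. Now apply the hypothesis to this invariant measure: $\int\phi\,d\mu\leq 0$. On the other hand, since $\phi$ is continuous, weak-$*$ convergence gives $\int\phi\,d\mu=\lim_{k\to\infty}\int\phi\,d\mu_{k}=\lim_{k\to\infty}\frac{1}{n_{k}}\sum_{j=0}^{n_{k}-1}\phi(T^{j}(x_{k}))\geq\eta>0$, a contradiction. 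Therefore no such $\eta$ exists, which is exactly the assertion $\limsup_{n\to+\infty}\max_{x\in X}\frac{1}{n}\sum_{j=0}^{n-1}\phi(T^{j}(x))\leq 0$ (note the maximum is attained for each $n$ because $x\mapsto\frac1n\sum_{j=0}^{n-1}\phi(T^{j}(x))$ is continuous on the compact space $X$).

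There is no serious obstacle here; the only points requiring a little care are purely bookkeeping: making sure the sequence $n_{k}$ and the maximizing points $x_{k}$ are chosen simultaneously to witness the failure of the $\limsup$ bound, and invoking weak-$*$ compactness to extract a convergent subsequence of the empirical measures. The continuity of $\phi$ (and of $T$) is used exactly once each — $\phi$ continuous so that $\int\phi\,d\mu_{k}\to\int\phi\,d\mu$, and $T$ continuous so that $\psi\circ T$ is continuous in the invariance computation. This is the classical variational argument of Schreiber/Sturm-type averaging lemmas, and since the statement is quoted from \cite{Cao03} one could alternatively just cite it; I include the short argument for completeness.
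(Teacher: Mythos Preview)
Your argument is correct and is the standard proof of this fact. Note that the paper itself does not give a proof of this lemma: it simply quotes the result from \cite{Cao03} (Lemma~2 there) and uses it as a black box in the proof of Proposition~\ref{relations between lambda and kappa}. The short empirical-measure argument you wrote is essentially the one given in \cite{Cao03}, so there is nothing to compare; your inclusion of the proof ``for completeness'' is appropriate and sound.
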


\begin{proof}[Proof of Proposition \ref{relations between lambda and kappa}] By Lemma \ref{Oseledet flag for domination}, there is an $f$-invariant subset $R$ with total measure such that for any $x\in R$ and any $V_{x}\subset T_{x}M$ with $\dim V_{x}=\dim F$, $$\lambda(x,V_{x})\leq\lambda(x,F(x)).$$ 

Write $k=\dim F$. Let $Gr(k,M)$ be the $k$-th Grassmannian manifold of $M$, i.e., $$Gr(k,M)=\{V\,|\,V\subset T_{x}M \text{\,is a linear space\,} \text{ with }\dim V=k, x\in M\}.$$ It is a compact metric space. Consider the induced map $\widetilde{f}:Gr(k,M)\to Gr(k,M)$ defined by $$\widetilde{f}(V_{x})=Df_{x}(V_{x}),\,V_{x}\in Gr(k,M).$$ 

Let $\mathscr{E}$ be the continuous vector bundle on $Gr(k,M)$ with $\mathscr{E}_{V_{x}}=T_{x}M, V_{x}\in Gr(k,M)$. Let $\Phi:\mathscr{E}\to\mathscr{E}$ be the bundle automorphism with $\Phi_{V_{x}}=Df_{x}, V_{x}\in Gr(k,M)$. Let $\pi$ be the projection from $Gr(k,M)$ to $M$ defined by $\pi(V_{x})=x$.  Let $\widetilde{R}$ be the Lyapunov regular set (see Lemma \ref{Oseledets Theorem}) w.r.t. the system $(Gr(k,M),\widetilde{f},\mathscr{E},\Phi)$. Note that by definition, $\pi(\widetilde{R})=R.$ Define $\phi:Gr(k,M)\to\mathbb{R}$ as $$\phi(V_{x})=\log|\det Df_{x}|_{V_{x}}|-\log|\det Df_{x}|_{F(x)}|.$$ By definition, if $V_{y}$ is close to $V_{x}$, then $y$ is close to $x$. As a consequence, $\log|\det Df_{y}|_{V_{y}}|$ is close to $\log|\det Df_{x}|_{V_{x}}|$ and $\log|\det Df_{y}|_{F(y)}|$ is close to $\log|\det Df_{x}|_{F(x)}|.$ Hence $\phi$ is a continuous function on $Gr(k,M)$. Since $\pi(\widetilde{R})=R$, given any $\widetilde{f}$-invariant measure $\widetilde{\mu}$ on $Gr(k,M)$, for $\widetilde{\mu}$ almost every point $V_{x}\in Gr(k,m)$, we have that $x\in R$. By Birkhoff ergodic theorem, $\frac{1}{n}\sum_{j=0}^{n-1}\phi(\widetilde{f}^{j}(V_{x}))$ converges. Then by the definition of $\phi$ and Lemma \ref{Oseledet flag for domination}, we have $$\lim_{n\to+\infty}\frac{1}{n}\sum_{j=0}^{n-1}\phi(\widetilde{f}^{j}(V_{x}))=\lambda(x,V_{x})-\lambda(x,F(x))\leq 0.$$ Hence $$\int\phi\,d\widetilde{\mu}=\lim_{n\to+\infty}\frac{1}{n}\sum_{j=0}^{n-1}\phi(\widetilde{f}^{j}(V_{x}))\leq 0.$$ Then by applying Lemma \ref{uniform upper bound for all points} to $\widetilde{f}$ and $Gr(k,M)$, we get that $$\limsup_{n\to+\infty}\max_{V_{x}\in Gr(k,m)}\frac{1}{n}\bigg(\log|\det Df^{n}_{x}|_{V_{x}}|-\log|\det Df^{n}_{x}|_{F}|\bigg)\leq 0.$$ This gives the  statement in Proposition \ref{relations between lambda and kappa}.

\end{proof}
\section{Proof of Theorem \ref{entropy of partially hyperbolic systems with multi 1D centers}}
With the preparations above, we now prove Theorem \ref{entropy of partially hyperbolic systems with multi 1D centers}.

The part $$h_{\top}(f)\leq\liminf_{n\to\infty}\frac{1}{n}\log\int\max_{V\subset T_{x}M}|\det Df_{x}^{n}|_{V}|\,d x.$$ is proved in Proposition \ref{topological entropy bounded by liminf}.

The other part is directly proved with the following two steps:

\begin{itemize}
	\item Proposition \ref{lower bound of topo entropy}: $$h_{\top}(f)\geq\limsup_{n\to\infty}\frac{1}{n}\log\int\max_{1\leq i\leq l}|\det Df_{x}^{n}|_{F^{i}}|\,d x$$ where the bundle $F^{i}=E^{i}\oplus E^{i+1}\oplus\cdots \oplus E^{l}\oplus E^{u}.$
	\item Proposition \ref{application to multi 1D centers}:  	$$\limsup_{n\to\infty}\frac{1}{n}\log\int\max_{V\subset T_{x}M}|\det Df_{x}^{n}|_{V}|\,d x= \limsup_{n\to\infty}\frac{1}{n}\log\int\max_{0\leq i\leq l}|\det Df_{x}^{n}|_{F^{i}}|\,d x.$$
\end{itemize}

The proof is now complete. \qed

\begin{Acknowledgements}
	We would like to thank J\'{e}r\^{o}me Buzzi, David Burguet, Jinhua Zhang and Jianyu Chen for helpful discussions.
\end{Acknowledgements}


\begin{thebibliography}{999}
\bibitem{ABC11}
F. Abdenur, C. Bonatti and S. Crovisier,
Nonuniform hyperbolicity for $C^1$-generic diffeomorphisms.
{\it Israel J. Math.} {\bf 183} (2011), 1-60. 	
\bibitem{BaY07}
L. Barreira and Y. Pesin
Nonuniform hyperbolicity: Dynamics of systems with nonzero Lyapunov exponents. {\it Encyclopedia of Mathematics and its Applications}, {\bf 115}(2001).
\bibitem{BDV05}
C. Bonatti, L. J. D\'{\i}az and M. Viana, Dynamics beyond uniform hyperbolicity. A global geometric and probabilistic perspective. {\it Encyclopaedia of Mathematical Sciences, Springer-Verlag}, 2005.
\bibitem{Bur19}
D. Burguet, Entropy of physical measures for $C^{\infty}$ dynamical systems. {\it  Commun. Math. Phys.}, {\bf 375}(2020), 1201-1222.
\bibitem{Cao03}
Y. Cao, Non-zero Lyapunov exponents and uniform hyperbolicity.
Nonlinearity 16 (2003), 1473-1479.
\bibitem{Cog00}
K. Cogswell, Entropy and volume growth.
{\it Ergodic Theory Dynam. Systems}, {\bf 20}(2000), 77-84.
	\bibitem{CYZ17}
	Y. Cao, D. Yang and Y. Zang, The entropy conjecture for dominated splitting with multi 1-D centers via upper semi-continuity of the metric entropy.  {\it Nonlinearity}, {\bf 30}(2017), 3076-3087. 
	\bibitem{CCE15}
	E. Catsigeras, M. Cerminara and H. Enrich,	The Pesin entropy formula for $C^{1}$ diffeomorphisms with dominated splitting.
	{\it Ergodic Theory Dynam. Systems}, {\bf 35} (2015), 737-761. 

\bibitem{CSY15}
S. Crovisier, M. Sambarino and D. Yang,
Partial hyperbolicity and homoclinic tangencies. {\it J. Eur. Math. Soc}. {\bf 17} (2015), 1-49. 
\bibitem{CYZ18}
S. Crovisier, D. Yang and J. Zhang, Empirical measures of partially hyperbolic attractors. {\it Comm. Math. Phys.},  {\bf 375}(2020), 725-764.

	\bibitem{DFP12}
	L.J. D\'{\i}az, T. Fisher, M.J. Pacifico and J.L. Vieitez, Entropy-expansiveness for partially hyperbolic diffeomorphisms. {\it Discrete Contin. Dyn. Syst.}, {\bf  32} (2012), 4195-4207.
\bibitem{GLS18}
X. Guo, G. Liao, W. Sun and D. Yang,
On the hybrid control of metric entropy for dominated splittings.
{\it Discrete Contin. Dyn. Syst.}, {\bf 38}(2018), 5011-5019. 
\bibitem{Kat80}
A. Katok, Lyapunov exponents, entropy and periodic orbits for diffeomorphisms.
{\it Inst. Hautes \'{E}tudes Sci. Publ. Math.}, {\bf 51}(1980), 137-173. 
\bibitem{Koz98}
O. Kozlovski, An integral formula for topological entropy of $C^{\infty}$ maps. {\it Ergodic Theory Dynam. Systems}, {\bf 18}(1998), 405-424. 
\bibitem{New88}
S. E. Newhouse, Entropy and volume.
{\it Ergodic Theory Dynam. Systems}, {\bf 8}(1988), 283-299. 
\bibitem{Ose68}
V. I. Oseledec, A multiplicative ergodic theorem. Characteristic Ljapunov, exponents of dynamical systems. 	{\it Trudy Moskov. Mat. Ob\v{s}\v{c}.}, {\bf 19}(1968), 179-210. 
\bibitem{Prz80}
F. Przytycki. An upper estimation for topological entropy of diffeomorphisms. {\it Invent. Math.}, {\bf 59}(1980), 205-213.
\bibitem{Sag14}
R. Saghin, Volume growth and entropy for $C^{1}$ partially hyperbolic diffeomorphisms. {\it Discrete Contin. Dyn. Syst.}, {\bf 34}(2014), 3789-3801. 
\bibitem{Wal82}
P. Walters, An introduction to ergodic theory, {\it volume 79 of Graduate Texts in Mathematics. Springer-Verlag}, (1982).
\bibitem{Yom87}
Y. Yomdin, Volume growth and entropy. {\it Israel J. Math.}, {\bf 57} (1987), 285-300.
\bibitem{Zan20}
Y. Zang, Entropies and volume growth of unstable manifold.  {\it  	arXiv:1912.13189}. 

\vskip 2cm
\begin{tabular}{l l}
	
	\emph{\normalsize Dawei YANG}
	\medskip\\
	
	\\
	\small School of Mathematical Sciences
	\\
	\small Soochow University
	\\
	\small Suzhou, 215006, P.R. China
	\\
	\small \texttt{yangdw@suda.edu.cn}
	
\end{tabular}
\begin{tabular}{l l}

	\emph{\normalsize Yuntao ZANG}
	\medskip\\
	
	\\
	\small School of Mathematical Sciences
	\\
	\small East China Normal University,
	\\
	\small Shanghai 200062, P.R. China.
	\\
	\small \texttt{yuntaozang@gmail.com}
\end{tabular}


\end{thebibliography}
\end{document}